\DeclareSymbolFont{cyrletters}{OT2}{wncyr}{m}{n}
\DeclareMathSymbol{\Sha}{\mathalpha}{cyrletters}{"58}
\def\C{{\mathcal C}}
\def\N{{\mathbb N}}
\def\G{{\mathscr G}}
\def\K{{\mathscr K}}
\def\T{{\mathcal T}}
\def\S{{\mathscr S}}
\def\ra{\rightarrow}
\def\ora{\stackrel{\leq}{\longrightarrow}}
\def\natl{\Longrightarrow}
\def\ep{\varepsilon}
\def\<{\langle}
\def\>{\rangle}
\def\leq{\leqslant}
\def\geq{\geqslant}
\def\obj{\operatorname{obj}}
\def\lam{\lambda}
\def\id{\operatorname{id}}
\def\endomo{\operatorname{End}}
\def\aut{\operatorname{Aut}}
\def\hol{\operatorname{Hol}}
\def\prem{\operatorname{Prem}}
\def\aff{\operatorname{Aff}}
\def\gpd{\textbf{Gpd}}
\def\ogpd{{\sf OGPD}}
\def\endo{{\sf END}}
\def\homogpd{\textbf{OGpd}}
\def\ol{\overline}
\def\dom{\mathbf d}
\def\ran{\mathbf r}
\numberwithin{equation}{section}
\title{Ordered groupoids and the holomorph of an inverse semigroup}
\author{N.D. Gilbert and E.A. McDougall}
\address{
School of Mathematical and Computer Sciences\\
and the Maxwell Institute for the Mathematical Sciences,\\
Heriot-Watt University, Edinburgh EH14 4AS, U.K.}
\email{N.D.Gilbert@hw.ac.uk, eam16@hw.ac.uk}
\thanks{The second author gratefully acknowledges the support of a Summer Vacation Scholarship from the Carnegie Trust for the Universities 
of Scotland, which made this collaboration possible.}
\date{}
\newtheorem{theorem}{Theorem}[section]
\newtheorem{prop}[theorem]{Proposition}
\newtheorem{lemma}[theorem]{Lemma}
\newtheorem{cor}[theorem]{Corollary}
\theoremstyle{definition}
\begin{document}
\thispagestyle{empty}

\begin{abstract}
We present a construction for the holomorph of an inverse semigroup, derived from the cartesian closed structure of the category of ordered
groupoids.  We compare the holomorph with the monoid of mappings that preserve the ternary heap operation on an inverse semigroup: 
for groups these two constructions coincide.  We present detailed calculations for semilattices of groups and for the polycyclic monoids.
\end{abstract}


\subjclass[2010]{Primary 20M18; Secondary 18D15, 20L05}

\keywords{holomorph, endomorphism, inverse semigroup, ordered groupoid}

\maketitle

\section*{Introduction}
The \emph{holomorph} of a group $G$ is the semidirect product of $\hol(G) = \aut(G) \ltimes G$ of $G$ and its
automorphism group (with the natural action of $\aut(G)$ on $G$).  The embedding of $\aut(G)$ into the symmetric
group $\Sigma_G$ on $G$ extends to an embedding of $\hol(G)$ into $\Sigma_G$ where $g \in G$ is identified with
its (right) Cayley representation $\rho_g : a \mapsto ag$.  Then $\hol(G)$ is the normalizer of $G$ in $\Sigma_G$
(\cite[Theorem 9.17]{Ro}).  A second interesting characterization of $\hol(G)$ is due to Baer \cite{Baer} (see also 
\cite{cert} and \cite[Exercise {\bf 520}]{Ro}).  The \emph{heap} operation on $G$ is the ternary operation defined by
$\< a,b,c \> = ab^{-1}c$.  Baer shows that a subset of $G$ is closed under the heap operation if and only if it is
a coset of some subgroup, and that the subset of $\Sigma_G$ that preserves $\< \cdots \>$ is precisely $\hol(G)$:
that is, if $\sigma \in \Sigma_G$, then for all $a,b,c, \in G$ we have
\[ \< a,b,c \> \sigma = \< a \sigma, b \sigma, c \sigma \> \]
if and only if $\sigma \in \hol(G)$.

The holomorph also arises naturally from category-theoretic considerations.  The category of groups embeds in the
category $\gpd$ of groupoids, which is cartesian closed. We therefore have a bifunctor ${\sf GPD}$ that associates to any two groupoids 
$A,B$ a groupoid ${\sf GPD}(A,B)$, whose objects are the functors $A \ra B$ and whose arrows are natural transformations
between functors.  It follows that to any group $G$ we may associate the groupoid ${\sf GPD}(G,G)$ and this will be an internal
monoid in the category of groupoids, whose objects are the endomorphisms $G \ra G$.  The full subgroupoid on the automorphism
group $\aut(G)$ is then an internal group in groupoids, and its group structure is precisely the holomorph $\hol(G)$.

Our aim in this paper is to produce a candidate for the holomorph of an inverse semigroup $S$.  Because of the close
connections between inverse semigroups and ordered groupoids we follow the category-theoretic approach that we have just
outlined, embedding the category of inverse semigroups into the category $\homogpd$ of ordered groupoids and so obtaining from an inverse
semigroup $S$ an ordered groupoid $\vec{S}$, and using the cartesian
closed structure there to produce an internal monoid $\ogpd(\vec{S},\vec{S})$ in the category of ordered groupoids. The objects of 
$\ogpd(\vec{S},\vec{S})$ are the ordered functors $\vec{S} \ra \vec{S}$ and the arrows are natural transformations.  
We identify $\hol(S)$ as the monoid $\ogpd(\vec{S},\vec{S})$.  This is a semidirect product of the monoid of \emph{premorphisms}
of $S$ (introduced by McAlister \cite{McA1} as $v$--prehomomorphisms) and a monoid of ordered functions on the semilattice of 
idempotents $E(S)$, related to the \emph{flow monoid} of \cite{ndgflow}.  We compare $\hol(S)$ with the collection $\Sha(S)$ of
functions that preserve the heap operation on $S$, and discuss in detail the cases when $S$ is a semilattice of groups and a
polycyclic monoid.

\section{Premorphisms}
Let $S$ be an inverse semigroup.
We denote by $E(S)$ the set of all idempotents of $S$.
Recall that the \emph{natural partial order}  $\leq$ on $S$ is
defined by
\[
s\leq t \Leftrightarrow (\exists e\in E(S))(s=te) \,. \]
It is well known that $(E(S),\leq)$ forms a semilattice.

\begin{lemma}
\label{npo}
Let $S$ be an inverse semigroup and $a,b \in S$.  Then the following are equivalent:
\begin{enumerate}[(i)]
\item $a \leq b$,
\item there exists $f \in E(S)$ such that $a=bf$,
\item $a = aa^{-1}b$,
\item $a=ba^{-1}a$.
\end{enumerate}
\end{lemma}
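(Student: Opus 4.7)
By inspection, (i) and (ii) are literally the same statement: the definition of $a\leq b$ quoted in the text already asserts that some idempotent $e$ satisfies $a=be$, and (ii) merely renames the bound variable. The substance of the lemma is therefore the equivalence (ii) $\Leftrightarrow$ (iii) $\Leftrightarrow$ (iv), which I will establish via the cycle (ii) $\Rightarrow$ (iii) $\Rightarrow$ (iv) $\Rightarrow$ (ii). The only ingredients required are the standard facts that in an inverse semigroup the set $E(S)$ is commutative, every idempotent is self-inverse, $(xy)^{-1}=y^{-1}x^{-1}$, and $xx^{-1}x=x$.

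The implication (iv) $\Rightarrow$ (ii) is immediate, since $f=a^{-1}a$ is idempotent and $a=ba^{-1}a=bf$. For (ii) $\Rightarrow$ (iii), starting from $a=bf$ with $f\in E(S)$, the plan is to compute $aa^{-1}=bf\cdot(bf)^{-1}=bfb^{-1}$ and then simplify $aa^{-1}b=bfb^{-1}b$; the key step is to use the commutativity of the idempotents $f$ and $b^{-1}b$ and the identity $bb^{-1}b=b$, which collapses the expression back to $bf=a$. For (iii) $\Rightarrow$ (iv), from $a=aa^{-1}b$ I will first compute $a^{-1}a = b^{-1}(aa^{-1})b$, so that $ba^{-1}a = bb^{-1}aa^{-1}b$; swapping the commuting idempotents $bb^{-1}$ and $aa^{-1}$ and applying the hypothesis (together with $bb^{-1}b=b$) then recovers $a$.

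There is no real obstacle here. The lemma is part of the classical folklore of inverse semigroup theory, and each implication reduces to a two- or three-line manipulation whose only subtlety is keeping track of which intermediate expressions are idempotent so that they may be freely permuted. The most likely place to misstep is the (ii) $\Rightarrow$ (iii) computation, where one must remember that $(bf)^{-1}=fb^{-1}$ rather than $f^{-1}b^{-1}$; because $f^{-1}=f$ this distinction is cosmetic, but it is a useful mental checkpoint.
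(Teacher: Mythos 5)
Your argument is correct: with the paper's definition of the natural partial order ($a\leq b$ iff $a=bf$ for some $f\in E(S)$), (i) and (ii) are indeed identical, and your cycle (ii) $\Rightarrow$ (iii) $\Rightarrow$ (iv) $\Rightarrow$ (ii), using only commutativity of idempotents, $f^{-1}=f$, $(xy)^{-1}=y^{-1}x^{-1}$ and $xx^{-1}x=x$, checks out at every step. The paper itself states Lemma \ref{npo} without proof, treating it as standard folklore, and your computation is precisely the usual textbook argument, so there is nothing to reconcile between the two.
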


\begin{lemma}
\label{archy}
If $S$ is an inverse semigroup and $x \in S$ satisfies $x \leq x^2$ then $x=x^2$: that is, $x$ is an idempotent.
\end{lemma}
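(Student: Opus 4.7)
The plan is to apply Lemma \ref{npo} directly: the hypothesis $x \leq x^2$ is, by characterisation (iii) of that lemma, equivalent to the equation $x = xx^{-1}x^2$. So the entire proof reduces to a one-line computation showing that the right-hand side equals $x^2$.

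First I would rewrite $xx^{-1}x^2 = (xx^{-1}x)\cdot x$, associating the product so as to exhibit the fundamental inverse-semigroup identity $xx^{-1}x = x$. Substituting this identity yields $xx^{-1}x^2 = x\cdot x = x^2$. Combined with the hypothesis $x = xx^{-1}x^2$, this gives $x = x^2$, which is precisely the statement that $x$ is idempotent.

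There is really no obstacle here beyond recognising that Lemma \ref{npo}(iii) converts the order relation into a product one can directly simplify; the alternative route via Lemma \ref{npo}(iv), writing $x = x^2 x^{-1} x = x(xx^{-1}x) = x\cdot x$, is equally short. I would present the proof in the former style for symmetry with how the natural partial order is most often used in what follows.
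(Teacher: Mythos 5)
Your proof is correct and is essentially identical to the paper's: both use Lemma \ref{npo}(iii) to rewrite $x \leq x^2$ as $x = xx^{-1}x^2$ and then simplify $(xx^{-1}x)x = x^2$. No further comment is needed.
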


\begin{proof}
By Lemma \ref{npo}, $x \leq x^2$ implies that $x = xx^{-1}x^2$: but $xx^{-1}x^2 = (xx^{-1}x)x=x^2$.
\end{proof}

Let $S$ and $T$ be inverse semigroups.  A function $\theta: S \ra T$ is a  {\em premorphism} if, for all $a,b \in S$,
$(ab) \theta \leq a \theta b \theta$.  Premorphisms were introduced by McAlister, under the name $v$--prehomomorphisms,
in \cite{McA1}.  We collect some useful facts about premorphisms from in the next two results.

\begin{lemma}
\label{premorph1}
Let $\theta : S \ra T$ be a premorphism.  Then:
\begin{enumerate}[(a)]
\item if $e \in E(S)$ then $e \theta \in E(T)$,
\item for all $a \in S$ we have $a^{-1}\theta = (a \theta)^{-1}$.
\end{enumerate}
\end{lemma}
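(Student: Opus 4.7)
The plan is to handle part (a) by a direct application of the premorphism inequality to the identity $e=e^2$: this yields $e\theta = (ee)\theta \leq e\theta\cdot e\theta = (e\theta)^2$, and Lemma \ref{archy} then forces $e\theta$ to be an idempotent of $T$.

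For part (b) the strategy is to prove that $a^{-1}\theta$ is an inverse of $a\theta$ in the inverse semigroup $T$; since inverses in inverse semigroups are unique, this identifies $a^{-1}\theta$ with $(a\theta)^{-1}$. Writing $s = a\theta$ and $t = a^{-1}\theta$ for convenience, I therefore need to verify $sts = s$ and $tst = t$. The first step is to iterate the premorphism inequality on the standard decomposition $a = aa^{-1}a$, using that multiplication in $T$ preserves the natural partial order, to obtain
\[ s = (aa^{-1}a)\theta \leq a\theta\cdot a^{-1}\theta\cdot a\theta = sts, \]
and symmetrically $t \leq tst$ from $a^{-1} = a^{-1}aa^{-1}$.

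The main obstacle is promoting these two inequalities to equalities, and this is where I expect to lean hardest on the inverse semigroup structure of $T$. The trick is to apply Lemma \ref{npo}(iii) inside $T$: from $s \leq sts$ I get
\[ s = s\,s^{-1}\,(sts) = (s\,s^{-1}\,s)\,t\,s = sts, \]
where $s^{-1}$ denotes the inverse of $s$ in $T$ and I use the identity $ss^{-1}s = s$ that holds in any inverse semigroup. The identical computation applied to $t \leq tst$ gives $tst = t$. Hence $t$ satisfies both of the equations characterising the unique inverse of $s$ in $T$, and therefore $a^{-1}\theta = t = s^{-1} = (a\theta)^{-1}$, completing the argument.
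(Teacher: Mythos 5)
Your proof is correct and follows essentially the same route as the paper: part (a) is identical, and in part (b) both arguments start from $a=aa^{-1}a$ to obtain $a\theta \leq a\theta\,a^{-1}\theta\,a\theta$ (and its dual for $a^{-1}$) and finish by invoking the uniqueness of inverses in $T$. The only difference is how the inequality is upgraded to an equality: the paper multiplies by $a^{-1}\theta$ and uses Lemma \ref{archy} to see that $a\theta\,a^{-1}\theta$ is idempotent, whereas you apply Lemma \ref{npo}(iii) directly to $s \leq sts$, a perfectly valid (and arguably slightly cleaner) variant of the same step.
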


\begin{proof}
(a) For $e \in E(S)$, $e \theta = e^2 \theta \leq e \theta e \theta$ and so by Lemma \ref{archy} we have $e \theta \in E(T)$.

(b) Since $a = aa^{-1}a$ we have $a \theta \leq a \theta a^{-1}\theta a \theta$ and so $a \theta a^{-1}\theta \leq a \theta a^{-1}\theta a \theta a^{-1} \theta$.
Again by Lemma \ref{archy}, $a \theta a^{-1} \theta$ is an idempotent, and so $a \theta a^{-1}\theta = a \theta a^{-1}\theta a \theta a^{-1} \theta$.
Multiplying in the right by $a \theta$, we deduce that 
\[ a \theta = a \theta a^{-1}\theta a \theta =  a \theta a^{-1}\theta a \theta .\]  
Similarly
\[ a^{-1} \theta = a^{-1} \theta a\theta a^{-1} \theta =  a^{-1} \theta a\theta a^{-1} \theta \]  
and hence $a^{-1} \theta = (a \theta)^{-1}$.
\end{proof}

\begin{prop} 
\label{premeq}
Let $S$ and $T$ be inverse semigroups.  A function $\theta: S \ra T$ is a premorphism if and only if
\begin{itemize}
\item $\theta$ is ordered,
\item if $a^{-1}a=bb^{-1}$ then $(ab)\theta = a \theta b \theta$.
\end{itemize}
\end{prop}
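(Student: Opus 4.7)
The plan is to prove both implications directly, using Lemmas \ref{npo} and \ref{premorph1} together with the standard fact that the natural partial order on an inverse semigroup is compatible with multiplication.

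For the forward direction, assume $\theta$ is a premorphism. To see that $\theta$ is ordered, suppose $a \leq b$; by Lemma \ref{npo} there exists $e \in E(S)$ with $a = be$, so $a\theta = (be)\theta \leq b\theta \cdot e\theta$, and because $e\theta \in E(T)$ by Lemma \ref{premorph1}(a), the product $b\theta \cdot e\theta$ lies below $b\theta$. Hence $a\theta \leq b\theta$. Next suppose $a^{-1}a = bb^{-1}$. The inequality $(ab)\theta \leq a\theta \cdot b\theta$ is immediate from the premorphism property, so the work lies in the reverse inequality. The key observation is that $abb^{-1} = a \cdot a^{-1}a = a$, which gives $a\theta = (abb^{-1})\theta \leq (ab)\theta \cdot b^{-1}\theta$. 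Multiplying on the right by $b\theta$ and invoking $b^{-1}\theta = (b\theta)^{-1}$ from Lemma \ref{premorph1}(b), the factor $b^{-1}\theta \cdot b\theta$ becomes an idempotent of $T$, whence $a\theta \cdot b\theta \leq (ab)\theta \cdot b^{-1}\theta \cdot b\theta \leq (ab)\theta$.

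For the converse, assume $\theta$ is ordered and the restricted multiplication law holds. Given arbitrary $a, b \in S$, put $e = a^{-1}a \cdot bb^{-1} \in E(S)$ and set $a' = ae$, $b' = eb$. Using that $e \leq a^{-1}a$ and $e \leq bb^{-1}$ one checks that ${a'}^{-1}a' = e = b'{b'}^{-1}$, and that $a'b' = aeb = a(a^{-1}a)(bb^{-1})b = ab$. The hypothesis then delivers $(ab)\theta = (a'b')\theta = a'\theta \cdot b'\theta$. Since $a' \leq a$ and $b' \leq b$, the order-preserving property of $\theta$ combined with compatibility of the natural partial order with multiplication gives $a'\theta \cdot b'\theta \leq a\theta \cdot b\theta$, and therefore $(ab)\theta \leq a\theta \cdot b\theta$, as required.

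I expect the forward direction's multiplicative equality to require the most care. The temptation is to try to bound $(ab)\theta$ directly from above, but the useful move runs the opposite way: one pushes $a\theta$ downward via the identity $a = abb^{-1}$ and then reinserts $b\theta$ on the right, so that the surplus factor $b^{-1}\theta \cdot b\theta$ collapses into an idempotent. The converse reduces to the standard device of rewriting $ab$ as $(ae)(eb)$ for the matching idempotent $e = a^{-1}a \cdot bb^{-1}$; once this rewriting is in hand, only order-preservation is needed.
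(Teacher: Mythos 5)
Your proposal is correct and follows essentially the same route as the paper: your converse decomposition $a'=ae$, $b'=eb$ with $e=a^{-1}a\,bb^{-1}$ yields exactly the paper's elements $x=abb^{-1}$, $y=a^{-1}ab$, and your forward argument is the mirror image of the paper's, collapsing $(b\theta)^{-1}b\theta$ instead of $a\theta(a\theta)^{-1}$ via Lemma \ref{premorph1}(b) and idempotent absorption. No gaps.
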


\begin{proof}
Suppose that $\theta$ has the two properties stated in the Proposition.  Set $x=abb^{-1}$ and $y=a^{-1}ab$.  Then $ab=xy$ with $x \leq a, y \leq b$, and
\[ x^{-1}x = bb^{-1}a^{-1}abb^{-1} = a^{-1}abb^{-1} = yy^{-1} .\] 
Hence $(ab)\theta = (xy)\theta = x \theta y \theta \leq a \theta b \theta$ and $\theta$ is a premorphism.

Conversely, suppose that $\theta$ is a premorphism.  If $a,s \in S$ with $a \leq s$ then $a=es$ for some $e \in E(S)$ and so
\[ a \theta = (es)\theta \leq (e \theta)(s \theta) \leq s \theta \]
since $e \theta \in E(T)$ by Lemma \ref{premorph1}(a).  Hence $\theta$ is ordered.  Now if $a,b \in S$ with $a^{-1}a=bb^{-1}$ we have
\begin{align*}
a \theta b \theta &= a \theta (bb^{-1}b)\theta = a \theta (a^{-1}ab) \theta \\
& \leq a \theta a^{-1} \theta (ab)\theta \\
&= a \theta (a \theta)^{-1} (ab) \theta \leq (ab) \theta \,.
\end{align*}
\end{proof}

The property that $a^{-1} \theta = (a \theta)^{-1}$ was included in the original definition of a premorphism in \cite{McA1}: its redundancy was noted in
\cite{McA2}.  Proposition \ref{premeq} is stated as part of Theorem 3.1.5 in \cite{mvlbook}.  

\begin{cor}
\label{premorph2}
If $\theta : S \ra T$ is a premorphism then for all $s \in S$, $(ss^{-1})\theta = s \theta \,(s^{-1})\theta = s \theta \,(s \theta)^{-1}$.
\end{cor}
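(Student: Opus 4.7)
The plan is to deduce the corollary as a direct consequence of the two preceding results, rather than by any fresh calculation. The statement factors into two equalities, and each can be assigned to one of the tools already at hand.

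First I would dispatch the second equality, $s\theta\,(s^{-1})\theta = s\theta\,(s\theta)^{-1}$. This is literally Lemma \ref{premorph1}(b) applied to $a = s$ and then multiplied on the left by $s\theta$, so nothing further is needed.

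For the first equality, $(ss^{-1})\theta = s\theta\,(s^{-1})\theta$, I would invoke the multiplicative clause of Proposition \ref{premeq} with $a = s$ and $b = s^{-1}$. The hypothesis there requires $a^{-1}a = bb^{-1}$; but $a^{-1}a = s^{-1}s$ and $bb^{-1} = s^{-1}(s^{-1})^{-1} = s^{-1}s$, so the hypothesis is satisfied trivially, and the proposition yields $(s \cdot s^{-1})\theta = s\theta\cdot(s^{-1})\theta$ on the nose.

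There is no real obstacle: the work has already been done in establishing Proposition \ref{premeq} and Lemma \ref{premorph1}(b). The only thing worth noting is that this corollary is the simplest illustration of why the two clauses of Proposition \ref{premeq} are the natural decomposition of the premorphism condition, since applying the multiplicative clause to the pair $(s, s^{-1})$ costs nothing yet produces an equality (rather than merely an inequality) out of the definition of a premorphism.
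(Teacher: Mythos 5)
Your proof is correct and follows exactly the route of the paper, which likewise deduces the corollary from Lemma \ref{premorph1}(b) together with the multiplicative clause of Proposition \ref{premeq} applied to the pair $(s,s^{-1})$; you have simply spelled out the details the paper leaves implicit.
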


\begin{proof}
This follows from Lemma \ref{premorph1}(b) and Proposition \ref{premeq}.
\end{proof}

We record from \cite{McA1} the following generalisation of part of 
Proposition \ref{premeq}: the proof of that result is easily adapted.

\begin{prop}{\cite[Lemma 1.4]{McA1}}
\label{prem_is_hom}
Let $\theta : S \ra T$ be a premorphism and suppose that $a,b \in S$ satisfy either that $a^{-1}a \geq bb^{-1}$ or that $a^{-1}a \leq bb^{-1}$.  Then
$(ab)\theta = a \theta b \theta$.
\end{prop}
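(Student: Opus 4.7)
The premorphism property already gives $(ab)\theta \leq a\theta b\theta$, so the work is to establish the reverse inequality under each of the comparability hypotheses. My plan is to recycle the decomposition used in the proof of Proposition \ref{premeq}, namely $ab = xy$ with $x = abb^{-1}$ and $y = a^{-1}ab$, for which one checks $x^{-1}x = yy^{-1}$. The point is that each of the two hypotheses forces one of $x,y$ to collapse to $a$ or $b$:
\begin{itemize}
\item If $a^{-1}a \geq bb^{-1}$, then $a^{-1}a \cdot bb^{-1} = bb^{-1}$, so $y = a^{-1}ab = b$ while $x = abb^{-1} \leq a$.
\item If $a^{-1}a \leq bb^{-1}$, then $a \cdot bb^{-1} = a(a^{-1}a)bb^{-1} = a(a^{-1}a) = a$, so $x = a$ while $y = a^{-1}ab \leq b$.
\end{itemize}

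I will treat the first case; the second is symmetric. A short computation in the semilattice $E(S)$ gives $x^{-1}x = bb^{-1}a^{-1}abb^{-1} = bb^{-1}$, so $x$ and $b$ satisfy the hypothesis of Proposition \ref{premeq}, whence $(ab)\theta = (xb)\theta = x\theta \cdot b\theta$. Since $\theta$ is ordered and $x \leq a$, we have $x\theta \leq a\theta$, so there is an idempotent $f \in E(T)$ with $a\theta = x\theta f$. Then
\[ a\theta \, b\theta = x\theta f b\theta \leq x\theta b\theta = (ab)\theta, \]
because multiplication on the left by $x\theta$ preserves the natural partial order and $fb\theta \leq b\theta$. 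Combined with $(ab)\theta \leq a\theta b\theta$, this gives the desired equality.

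The symmetric case $a^{-1}a \leq bb^{-1}$ is handled the same way: one applies Proposition \ref{premeq} to the pair $a,y$ (where $yy^{-1} = a^{-1}a$) to get $(ab)\theta = a\theta \cdot y\theta$, then writes $b\theta = y\theta g$ for some idempotent $g$, and concludes $a\theta b\theta \leq a\theta y\theta = (ab)\theta$. There is no real obstacle here; the only thing to check carefully is the two little idempotent identities that make $x$ or $y$ collapse, which are immediate consequences of the comparability in $E(S)$.
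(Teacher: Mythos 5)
Your reduction is fine as far as it goes: under $a^{-1}a \geq bb^{-1}$ you correctly get $y = a^{-1}ab = b$, $x = abb^{-1} \leq a$, $x^{-1}x = bb^{-1}$, and hence $(ab)\theta = x\theta\, b\theta$ by Proposition \ref{premeq} (and symmetrically in the other case). The problem is the step that is supposed to give the reverse inequality $a\theta\, b\theta \leq (ab)\theta$: from $x \leq a$ and the fact that $\theta$ is ordered you get $x\theta \leq a\theta$, and by the definition of the natural partial order this means $x\theta = a\theta f$ for some idempotent $f$ --- \emph{not} $a\theta = x\theta f$, which is what you wrote and which would assert $a\theta \leq x\theta$. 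With the order the right way round, all you can deduce is $x\theta\, b\theta \leq a\theta\, b\theta$, i.e.\ $(ab)\theta \leq a\theta\, b\theta$, which is just the premorphism inequality you already had; your argument never actually produces the reverse inequality. The same inversion occurs in your second case (``$b\theta = y\theta g$'' should be $y\theta = b\theta g$).

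The intended argument (this is what the paper means by saying the proof of Proposition \ref{premeq} is ``easily adapted'', following the converse half of that proof) uses the collapse you established, but differently: if $a^{-1}a \geq bb^{-1}$ then $b = a^{-1}ab$, so
\[
a\theta\, b\theta = a\theta\,(a^{-1}ab)\theta \leq a\theta\, a^{-1}\theta\,(ab)\theta = a\theta\,(a\theta)^{-1}(ab)\theta \leq (ab)\theta,
\]
using the premorphism inequality for the product $a^{-1}\cdot(ab)$, Lemma \ref{premorph1}(b), and the fact that the idempotent $a\theta(a\theta)^{-1}$ can be absorbed; if $a^{-1}a \leq bb^{-1}$ then $a = abb^{-1}$ and one argues with $a\theta\, b\theta = (abb^{-1})\theta\, b\theta \leq (ab)\theta\, (b\theta)^{-1} b\theta \leq (ab)\theta$. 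Your identities $y=b$ (resp.\ $x=a$) are exactly what make this work, so the repair is local, but as written the key inequality in your proof is unjustified.
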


The set of all premorphisms $S \ra T$ is denoted by $\prem(S,T)$: we write $\prem(S)$ for $\prem(S,S)$.  It is clear that the composition of two premorphisms is a
premorphism, and so $\prem(S)$ is a monoid.

\section{Inverse semigroups and ordered groupoids}
\label{is_and_og}
A {\em groupoid} $G$ is a small category in which every morphism
is invertible.  We consider a groupoid as an algebraic structure
following \cite{pjhbook}: the elements are the morphisms, and
composition is an associative partial binary operation.
The set of identities in $G$ is denoted $E(G)$, and an element $g \in G$ has domain
$g\dom=gg^{-1}$ and range $g\ran=g^{-1}g$.    For each $x \in E(G)$ the set
$G(x) = \{ g \in G : g \dom = x = g \ran \}$ is a subgroup of $G$, called the \emph{local subgroup} at $x$.
A groupoid $G$ is connected if, for any $x,y \in E(G)$ there exists $g \in G$ with $g \dom =x$ and $g \ran =y$.
In a connected groupoid, all local subgroups are isomorphic, and for any such local subgroup $L$ there is an
isomorphism $E(G) \times L \times E(G)$, where the latter set carries the groupoid composition
$(x,k,y)(y,l,z) = (x,kl,z)$.  

An {\em ordered groupoid} $(G,\leq)$ is a groupoid $G$
with a partial order $\leq$ satisfying the following axioms:
\begin{enumerate}
\item[OG1] for all $g,h \in G$, if $g \leq h$ then $g^{-1} \leq h^{-1}$,
\item[OG2] if $g_1 \leq g_2 \,, h_1 \leq h_2$ and if the compositions
$g_1h_1$ and $g_2h_2$ are   defined, then $g_1h_1 \leq g_2h_2$,
\item[OG3] if $g \in G$ and $x$ is an identity of $G$ with $x \leq
g \dom$, there exists a unique element $(x|  g)$, called the {\em restriction}
of $g$ to $x$, such that $(x|g)\dom=x$ and $(x|g) \leq g$,
\end{enumerate}
As a consequence of [OG3] we also have:
\begin{enumerate}
\item[OG3*] if $g \in G$ and $y$ is an identity of $G$ with $y \leq
g \ran$, there exists a unique element $(g|  y)$, called the {\em corestriction}
of $g$ to $y$, such that $(g|y)\ran=y$ and $(g|  y) \leq g$,
\end{enumerate}
since the corestriction of $g$ to $y$ may be defined as $(y |  g^{-1})^{-1}$.

Let $G$ be an ordered groupoid and let $a,b \in G$.  If 
$a \ran$ and $b \dom$ have a greatest lower bound $\ell \in E(G)$, then we may define the
{\em pseudoproduct} of $a$ and $b$ in $G$ as
$a \otimes b = (a | \ell)(\ell|  b)$,
where the right-hand side is now a composition defined in $G$. As 
Lawson shows in Lemma 4.1.6 of \cite{mvlbook}, this is a partially
defined associative operation on $G$. 

If $E(G)$ is a meet semilattice then $G$ is called an \emph{inductive} groupoid. The pseudoproduct  is then everywhere defined and
$(G,\otimes)$ is an inverse semigroup.  On the other hand, given an inverse semigroup $S$ with
semilattice of idempotents $E(S)$, then $S$ is a poset under the natural partial order, and the
restriction of its multiplication to the partial composition
\[ a \cdot b = ab \in S \; \text{defined when} \; a^{-1}a=bb^{-1} \]
gives $S$ the structure of an ordered groupoid, which we denote by $\vec{S}$.  These constructions 
give an isomorphism between the categories of inverse semigroups and inductive groupoids: this is the \emph{Ehresmann-Schein-Nambooripad
Theorem} \cite[Theorem 4.1.8]{mvlbook}.

Proposition \ref{premeq} above records the details of the correspondence between morphisms in the Ehresmann-Schein-Nambooripad
Theorem: ordered functors between inductive groupoids correspond to premorphisms of inverse semigroups.

\section{The category of ordered groupoids is cartesian closed}
\label{ogpdiscc}
We can now use constructions for ordered groupoids to derive constructions for inverse semigroups, and the key construction for this paper
will be the cartesian closed structure on the category $\homogpd$ of ordered groupoids.  This gives, for any two ordered groupoids
$A,B$ an \emph{internal hom functor} $\ogpd(A,B)$ that is again an ordered groupoid.  If $A,B$ are inductive then
$\ogpd(A,B)$ need not be inductive, and so to obtain a construction applicable to inverse semigroups we need to use the
larger category of ordered groupoids.  This is analogous to the construction of the holomorph of a group via the 
internal hom functor on the category of groupoids described in the introduction.

The cartesian closed structure on $\homogpd$ is just the ordered version of the well-known cartesian closed structure on 
$\gpd$, but we give a detailed account of it here to clarify the later application to inverse semigroups.  An informative and more detailed discussion,
including further applications of these ideas,may be found in \cite[Appendix C]{BrHiSi}.

Let $A,B$ be ordered groupoids.
The objects of $\ogpd(A,B)$ are the ordered functors $A \ra B$.  Given two such ordered functors $f,g : A \ra B$,
an arrow in $\ogpd(A,B)$ from $f$ to $g$ is an ordered natural transformation $\tau$ from $f$ to $g$: that is,
$\tau$ is an ordered function $\obj(A) \ra B$ such that, for each arrow $a \in A$ with $a \dom = x$ and $a \ran =y$,
the square
\[ \xymatrixcolsep{3pc}
\xymatrix{
xf \ar[d]_{x \tau} \ar[r]^{af} & yf \ar[d]^{y \tau}\\
xg \ar[r]_{ag} & yg}
\]
in $B$ commutes.  We write $\tau: f \natl g$.  Note that for all $x \in \obj(A)$ we have $(x \tau)\dom =(x \tau)(x \tau)^{-1}=xf$.  Now $f$ and $\tau$ determine $g$, since for any $a \in A$ we have
$ag = ((a \dom) \tau)^{-1} (af) ((a \ran)\tau)$.  Given ordered natural transformations $\tau: f \natl g$ and $\sigma: g \natl h$
their composition is the ordered natural transformation $\tau \cdot \sigma : f \natl h$ defined by
$x(\tau \cdot \sigma) = (x \tau)(x \sigma)$.  (Note that $(x \sigma)\dom = xg = (x \tau)\ran$.) This makes $\ogpd(A,B)$ a groupoid, since an ordered natural transformation $\tau$ has inverse $\ol{\tau} : x \mapsto (x \tau)^{-1}$.

If $p,x \in \obj(A)$ and $p \leq x$ then $pf \leq xf$ and $p \tau \leq x \tau$ with $(p \tau)\dom = pf$.  Hence we have
$p \tau = (pf| x \tau)$ and, if every object of $A$ is below a maximal object, then $\tau$ is determined by its values on the maximal 
objects of $\obj(A)$.  In the special case that $\obj(A)$ has a maximum $m$, then $\tau$ is determined by $m \tau$ and for
all $x \in \obj(A)$ we have $x \tau = (xf | m \tau)$.

\begin{lemma}
If $A,B$ are ordered groupoids then $\ogpd(A,B)$ is also an ordered groupoid.
\end{lemma}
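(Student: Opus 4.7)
The plan is to equip $\ogpd(A,B)$ with the pointwise partial order, declaring $\tau \leq \sigma$ precisely when $x\tau \leq x\sigma$ in $B$ for every $x \in \obj(A)$. Reflexivity, antisymmetry and transitivity are inherited pointwise from $B$. Since composition and inversion of natural transformations have already been defined pointwise in the paragraphs preceding the lemma (namely $x(\tau \cdot \sigma) = (x\tau)(x\sigma)$ and $x \ol{\tau} = (x\tau)^{-1}$), axioms [OG1] and [OG2] follow immediately from the corresponding axioms in $B$ applied at each object.

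The work lies in verifying [OG3]. Suppose $\tau : f \natl g$ and let $\iota_h$ be an identity of $\ogpd(A,B)$ arising from an ordered functor $h : A \ra B$, with $\iota_h \leq \tau\dom$; pointwise this says $xh \leq xf$ for every $x \in \obj(A)$. I define the candidate restriction $\rho = (\iota_h|\tau)$ by $x\rho = (xh|x\tau)$ in $B$, which is legitimate since $xh \leq xf = (x\tau)\dom$. A target ordered functor $h' : A \ra B$ must then be produced so that $\rho : h \natl h'$; the formula is forced by the naturality square, with $xh' = (x\rho)\ran$ on objects and, for $a : x \ra y$ in $A$, $ah' = (x\rho)^{-1}(ah)(y\rho)$ on arrows. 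Functoriality of $h'$ is a direct computation: identities are preserved because $(x\rho)^{-1}(1_{xh})(x\rho) = 1_{xh'}$, and composition is preserved by telescoping the middle term $(y\rho)(y\rho)^{-1} = 1_{yh}$. Order-preservation of $h'$ on arrows follows from [OG2] in $B$ applied to the three-factor formula together with monotonicity of each factor, while the claim that $\rho$ itself is ordered as a function $\obj(A) \ra B$ uses uniqueness of restrictions: if $p \leq x$ in $\obj(A)$ then $(ph|p\tau) = (ph|(xh|x\tau))$ by that uniqueness, and the latter is $\leq (xh|x\tau) = x\rho$ directly. The conditions $\rho\dom = \iota_h$ and $\rho \leq \tau$ are immediate, and uniqueness of $\rho$ follows from the uniqueness clause of [OG3] in $B$ applied pointwise.

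The main obstacle is the construction and verification of the target functor $h'$: the formula mingles restrictions and inverses of $\tau$ with arrows of $h$, so checking functoriality and order-preservation requires careful use of [OG1]--[OG3] in $B$ together with the naturality of $\tau$. Everything else reduces to routine pointwise bookkeeping against the axioms of $B$.
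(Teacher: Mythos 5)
Your verification of [OG3] is essentially a more detailed version of the paper's own argument (the paper also restricts pointwise, setting $x(f|\sigma)=(xf|x\sigma)$, and simply asserts that this yields an ordered natural transformation, whereas you construct the target functor and check it). The genuine problem is your definition of the order. An arrow of $\ogpd(A,B)$ is not just the function $\tau:\obj(A)\ra B$: it is a pair $(f,\tau)$, and the paper stresses that $f$ must be carried along because $\tau$ determines $f$ only on objects, via $(x\tau)\dom=xf$, and not on arrows. Consequently the relation ``$x\tau\leq x\sigma$ for all $x\in\obj(A)$'' is not antisymmetric on arrows: if $A$ and $B$ are groups with the trivial order, two distinct homomorphisms $f\neq f''$ give distinct arrows $(f,\tau)$ and $(f'',\tau)$ with the same underlying object-function, and your relation makes each $\leq$ the other. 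For the same reason your order restricts on identity arrows to a comparison of functors at objects only, whereas the paper orders the objects of $\ogpd(A,B)$ by $f\leq g$ iff $af\leq ag$ for \emph{every} arrow $a\in A$; in an ordered groupoid the order on identities must agree with the order on objects, so your structure is not the one the paper needs (and the weaker identity order would also break the later cartesian-closed bookkeeping, e.g.\ orderedness of the evaluation functor).

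The repair is local: define $(f,\tau)\leq(g,\sigma)$ to mean $af\leq ag$ for all arrows $a\in A$ together with $x\tau\leq x\sigma$ for all $x\in\obj(A)$. Antisymmetry is then genuine, and [OG1], [OG2] require the extra (easy) check on the functor components, which follows from the formula $ag'=((a\dom)\sigma)^{-1}(ag)((a\ran)\sigma)$ for the target functor together with [OG1] and [OG2] in $B$. With this order the hypothesis of [OG3] becomes $ah\leq af$ for all arrows $a$, which is stronger than what you assumed, so your construction of $\rho$ and of the target functor $h'$, your argument that $\rho$ is an ordered map via uniqueness of restrictions, and your pointwise uniqueness argument all go through unchanged and recover the paper's proof.
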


\begin{proof}
We have already described the underlying groupoid structure.  For the ordering on $\ogpd(A,B)$, suppose that
$f,g : A \ra B$ are ordered functors and that $f \leq g$: that is, for all $a \in A$ we have $af \leq ag$.
Suppose that $\sigma: g \natl h$, so that for all $x \in \obj(A)$ we have $(x \sigma)\dom = xg$.  Then $xf \leq xg$
and so $x \sigma$ has a unique restriction $(xf|x \sigma)$ to $xf$ in $B$.  The restriction of $\sigma$ to $f$ is then
defined by $x(f|\sigma) = (xf | x \sigma)$.  This is an ordered function $\obj(A) \ra B$ and defines an ordered
natural transformation from $f$.  Moreover, suppose that $\tau: f \natl k$ and that $\tau \leq \sigma$.  Then
for all $x \in \obj(A)$, we have $(x \tau)\dom = xf$ and $x \tau \leq x \sigma$.  Hence $x \tau = (xf|x \sigma)$
and so $\tau = (f| \sigma)$.
\end{proof}

We shall now identify an arrow in $\ogpd(A,B)$ with a pair $(f,\tau)$ where $f: A \ra B$ is an ordered functor and
$\tau: f \natl g$ is an ordered natural transformation. As already remarked, $f$ and $\tau$ determine $g$.
We now have an ordered functor $\ep:A \times \ogpd(A,B) \ra B$ given by 
\[ \ep : (a,(f,\tau)) \mapsto (af)(a \ran)\tau. \]

\begin{lemma}
Given ordered groupoids $A,B$ and $C$ and an ordered functor $\gamma : A \times B \ra C$ there exists a unique ordered functor
$\lam : B \ra \ogpd(A,C)$ such that the diagram
\[ \xymatrixcolsep{5pc}
\xymatrix{
A \times B \ar[d]_{1_A \times \lam} \ar@/^1pc/[dr]^{\gamma} \\
A \times \ogpd(A,C) \ar[r]_-{\ep} & C}
\]
commutes.
\end{lemma}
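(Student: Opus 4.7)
The plan is the standard currying construction, adapted to the pair description $(f,\tau)$ of arrows in $\ogpd(A,C)$ set up just before the statement.  On an identity $y \in \obj(B)$, I define $y\lam : A \ra C$ to be the ordered functor $a \mapsto (a,y)\gamma$.  On an arrow $b \in B$ with $b\dom = y_1$ and $b\ran = y_2$, I define the ordered natural transformation $\tau_b : y_1\lam \natl y_2\lam$ by $x\tau_b = (x,b)\gamma$ for $x \in \obj(A)$, and I set $b\lam = (y_1\lam,\tau_b)$.  Commutativity of the triangle is then the calculation
\[
(a,b)(1_A \times \lam)\ep \;=\; (a,b\lam)\ep \;=\; (a\,(y_1\lam))\cdot((a\ran)\tau_b) \;=\; (a,y_1)\gamma \cdot (a\ran,b)\gamma \;=\; (a,b)\gamma,
\]
where the last step uses $(a,y_1)(a\ran,b) = (a,b)$ in $A \times B$.

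That $y\lam$ is an ordered functor and that $\tau_b$ is an ordered natural transformation follow from restricting $\gamma$ to the slices $A \times \{y\}$ and $\obj(A) \times \{b\}$; the naturality square for $\tau_b$ at an arrow $a:x_1 \ra x_2$ of $A$ collapses to the equality $(a,y_1)(x_2,b) = (x_1,b)(a,y_2) = (a,b)$ in $A \times B$.  Functoriality of $\lam$ reduces, via the composition rule $(f,\tau)(g,\sigma) = (f,\tau\cdot\sigma)$ in $\ogpd(A,C)$, to the identity $\tau_{b_1b_2} = \tau_{b_1}\cdot\tau_{b_2}$, which in turn is $(x,b_1b_2)\gamma = (x,b_1)\gamma\cdot(x,b_2)\gamma$ coming from $(x,b_1)(x,b_2) = (x,b_1b_2)$ in the product; preservation of identities is immediate.

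For order-preservation, if $b \leq b'$ in $B$ with $b\dom = y_1 \leq y_1' = b'\dom$, then $(a,y_1) \leq (a,y_1')$ and $(x,b) \leq (x,b')$ hold in $A \times B$ for every $a \in A$ and $x \in \obj(A)$, so applying the ordered functor $\gamma$ gives $y_1\lam \leq y_1'\lam$ pointwise on $A$ and $\tau_b \leq \tau_{b'}$ pointwise on $\obj(A)$.  By the preceding lemma this is exactly the assertion $(y_1\lam,\tau_b) \leq (y_1'\lam,\tau_{b'})$ in $\ogpd(A,C)$, i.e.\ $b\lam \leq b'\lam$.

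Uniqueness is forced by the triangle.  For any ordered functor $\lam':B \ra \ogpd(A,C)$ making the diagram commute, if $y \in \obj(B)$ then $y\lam'$ is an identity of $\ogpd(A,C)$, hence an ordered functor $f:A \ra C$; the triangle at $(a,y)$ reads $af = (a,y)\gamma$, so $y\lam' = y\lam$.  For an arrow $b:y_1 \ra y_2$ in $B$, write $b\lam' = (y_1\lam,\tau'_b)$; taking $a = x$ an identity of $A$ in the triangle gives $x\tau'_b = (x,b)\gamma$ for every $x \in \obj(A)$, so $\tau'_b = \tau_b$ and $b\lam' = b\lam$.  The only mildly delicate step is the order-preservation verification, which requires matching the order on $\ogpd(A,C)$ unpacked in the previous lemma against restrictions in $C$; everything else is bookkeeping on the product groupoid $A \times B$.
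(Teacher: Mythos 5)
Your proposal is correct and follows essentially the same route as the paper: the same slicewise definition $y\lam : a \mapsto (a,y)\gamma$, $x(b\lam)=(x,b)\gamma$, and the same computation $(a,b)(1_A\times\lam)\ep = (a,y_1)\gamma\,(a\ran,b)\gamma = (a,b)\gamma$ verifying the triangle. You additionally spell out functoriality, order-preservation and uniqueness, which the paper leaves implicit, but the underlying argument is identical.
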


\begin{proof}
For $b \in B$ with $b \dom =p$ and $b \ran = q$, we define $p\lam$ to be the ordered morphism $A \ra C$ given by
$a(p \lam) = (a,p)\gamma$, and $b \lam$ is the ordered natural transformation $p \lam \natl q \lam$ given by
$x(b \lam) = (x,b)\gamma$ for all $x \in \obj(A)$.  Hence if $a \dom = x$ and $a \ran =y$ we get a commutative square
\[ \xymatrixcolsep{3pc}
\xymatrix{
\cdot \ar[d]_{(a,p)\gamma} \ar[r]^{(x,b)\gamma} & \cdot \ar[d]^{(a,q)\gamma}\\
\cdot \ar[r]_{(y,b)\gamma} & \cdot}
\]
in $C$.  Then
\begin{align*}
(a,b)(1_A \times \lam)\ep & = (a,(p \lam,b \lam))\ep \\
&= a(p\lam)y(b\lam)\\
&= (a,p)\gamma(y,b)\gamma = (a,b)\gamma.
\end{align*}
\end{proof}

The mapping $\nu: \gamma \mapsto \lam$ defined in the lemma defines a function 
\[\nu: \homogpd(A \times B,C) \ra \homogpd(B,\ogpd(A,C))\,.\]
Now given any $\eta : B \ra \ogpd(A,C)$ we can compose $1_A \times \eta : A \times B \ra A \times \ogpd(A,C)$
with $\ep$ to obtain $\delta : A \times B \ra C$:
\[ \xymatrixcolsep{5pc}
\xymatrix{
A \times B \ar[d]_{1_A \times \eta} \ar@/^1pc/[dr]^{\delta} \\
A \times \ogpd(A,C) \ar[r]_-{\ep} & C \,.}
\]
and the mapping $\eta \mapsto \delta$ is inverse to $\nu$.  Hence we have a natural bijection
\[ \nu: \homogpd(A \times B,C) \ra \homogpd(B,\ogpd(A,C)) \,.\]

\begin{cor}
\label{adj_isom}
The bijection $\nu$ extends to a natural isomorphism of ordered groupoids
\[ \nu: \ogpd(A \times B,C) \ra \ogpd(B,\ogpd(A,C)) \,.\]
\end{cor}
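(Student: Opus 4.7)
The plan is to extend $\nu$ to arrows by currying natural transformations, then verify that this extension is a functor of groupoids, preserves and reflects the partial order, and is bijective on arrows. Because every arrow in either of the groupoids $\ogpd(A \times B,C)$ and $\ogpd(B,\ogpd(A,C))$ is determined by its action on objects, each verification reduces to evaluating $\Theta$ on morphisms of $A \times B$ of the special forms $(a,1_p)$ and $(1_x,b)$.

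First, I would define $\nu$ on an arrow $\Theta: \gamma_1 \natl \gamma_2$ of $\ogpd(A \times B,C)$. Writing $\lambda_i = \nu(\gamma_i)$ for the curried objects, I set, for each $p \in \obj(B)$ and $x \in \obj(A)$,
\[ x(p\,\nu(\Theta)) = (x,p)\Theta. \]
Naturality of $\Theta$ on arrows of the form $(a,1_p)$ shows that $p\,\nu(\Theta)$ is an ordered natural transformation $p\lambda_1 \natl p\lambda_2$ in $\ogpd(A,C)$. I would then show that the assignment $p \mapsto p\,\nu(\Theta)$ is itself an ordered natural transformation $\lambda_1 \natl \lambda_2$: the naturality square at a morphism $b: p \to q$ in $B$ corresponds, after unwinding the order on $\ogpd(A,C)$, to the naturality of $\Theta$ on arrows $(1_x,b)$; and if $p \leq q$ in $\obj(B)$, the required identity $p\,\nu(\Theta) = (p\lambda_1 | q\,\nu(\Theta))$ follows from $(x,p)\Theta \leq (x,q)\Theta$ and OG3.

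I would then verify functoriality, which is routine because composition in both internal-hom groupoids is pointwise, and the identity natural transformation on $\gamma$ curries to the identity on $\nu(\gamma)$. Order preservation on arrows amounts to observing that $\Theta \leq \Psi$ forces $(x,p)\Theta$ to be the restriction of $(x,p)\Psi$ to $(x,p)\gamma_1$, matching the restriction definition of the order on $\ogpd(B,\ogpd(A,C))$ already recorded. An explicit inverse is given by sending an arrow $\Xi: \lambda_1 \natl \lambda_2$ in $\ogpd(B,\ogpd(A,C))$ to the ordered function $(x,p) \mapsto x(p\,\Xi)$; substitution shows the two assignments are mutually inverse, and by the same bookkeeping the inverse is order-preserving. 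Naturality of $\nu$ in the three variables $A$, $B$, $C$ follows from the standard argument for cartesian closed categories, together with the pointwise nature of the arrow-level formula.

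The main obstacle is the order-theoretic bookkeeping. The partial order on $\ogpd(A,C)$ is defined through restriction of the codomain functor, and this cascades into the outer level: checking that $p \mapsto p\,\nu(\Theta)$ is an ordered map into $\ogpd(A,C)$, and that $\nu$ itself is order-preserving on arrows, requires repeated applications of OG3 as one slides the domain along $\leq$. No single step is difficult, but a consistent convention for labelling the shifting domains is essential to avoid an accidental mismatch.
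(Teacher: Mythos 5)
Your proposal is correct and is exactly the routine verification the paper leaves implicit: the corollary is stated without proof, and the intended argument is precisely your currying of natural transformations, with naturality checked on the generating arrows $(a,1_p)$ and $(1_x,b)$ and the order conditions handled by uniqueness of restrictions (OG3). The only cosmetic slip is the phrase about ``unwinding the order'' in the naturality square for $b\colon p\to q$, which is a purely groupoid-theoretic (pointwise composition) check rather than an order-theoretic one; the order enters only where you invoke OG3, as you correctly do elsewhere.
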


\subsection{The endomorphism groupoid}
The ordered functor
\[ A \times \ogpd(A,B) \times \ogpd(B,C) \stackrel{\ep \times 1}{\longrightarrow}
B \times \ogpd(B,C) \stackrel{\ep}{\ra} C \]
corresponds, under the isomorphism of Proposition \ref{adj_isom}, to an ordered functor
\[\mu:  \ogpd(A,B) \times \ogpd(B,C) \ra \ogpd(A,C) \]
called {\em composition}.  On objects, this is just the composition of ordered functors: if $f: A \ra B$ and
$g: B \ra C$ then $(f,g)\mu = fg$.  Now given arrows $(f,\tau)$ and $(g,\sigma)$ in $\ogpd(A,B)$ and $\ogpd(B,C)$
respectively, their composition $((f,\tau),(g,\sigma))\mu = (fg,\phi)$ where, for $x \in \obj(A)$, we have
$x \phi = (x\tau)g((x\tau)\ran)\sigma$.

Of particular interest is the case when $A=B=C$.  We then denote $\ogpd(A,A)$ by $\endo(A)$: the functor
$\mu : \endo(A) \times \endo(A) \ra \endo(A)$ then makes $\endo(A)$ into a monoid in the category of groupoids.
In detail, we have
\[ \endo(A) = \{ (f,\tau): f \in \homogpd(A,A), \tau : \obj(A) \ra A, (x \tau)\dom = xf \} \,.\]
with the monoid operation given by $(f,\tau) \diamond (g,\sigma) = (fg,\tau g \ast \sigma)$, where for $x \in \obj(A)$,
$x(\tau g \ast \sigma) = (x \tau)g ((x \tau)\ran)\sigma$.  

The fact that this is a monoid in the category of groupoids implies that for any four arrows
$(f,\tau),(g,\sigma),(h,\psi),(k,\phi) \in \endo(A)$ with $(f,\tau)(g,\sigma)$ and $(h,\psi)(k,\phi)$ defined in
the groupoid composition on $\endo(A)$, we have the \emph{interchange law}:
\begin{equation}
\label{interchange}
 ((f,\tau)(g,\sigma)) \diamond ((h,\psi)(k,\phi)) = ((f,\tau) \diamond (h,\psi))((g,\sigma) \diamond (k,\phi)) \,.
 \end{equation}
It is worth seeing why this works in the current setting.  
On the left-hand side we have
\begin{align*}
((f,\tau)(g,\sigma)) \diamond ((h,\psi)(k,\phi)) &= (f,\tau \cdot \sigma) \diamond (h,\psi \cdot \phi) \\
&= (fh, (\tau \cdot \sigma)h \ast (\psi \cdot \phi))\\
&= (fh, (\tau h \cdot \sigma h) \ast (\psi \cdot \phi)).
\end{align*}
On the right-hand side we have
\begin{align*}
((f,\tau) \diamond (h,\psi))((g,\sigma) \diamond (k,\phi)) &= (fh,\tau h \ast \psi)(gk ,\sigma k \ast \phi) \\
&= (fh, (\tau h \ast \psi) \cdot (\sigma k \ast \phi)).
\end{align*}
Since $\tau: f \natl g$ and $\psi: h \natl k$, it  is easy to see that $\tau h \ast \psi : fh \natl gk$ and the composition
here is defined.  Now for $x \in \obj(A)$,
\[ x(\tau h \cdot \sigma h) \ast (\psi \cdot \phi) = (x \tau h)(x \sigma h)(x \sigma \ran) \psi (x \sigma \ran) \phi \]
whilst
\[ x(\tau h \ast \psi) \cdot (\sigma k \ast \phi) = (x \tau h)(x \tau \ran)\psi(x \sigma k)(x \sigma \ran)\phi.\]
Because $\psi$ is a natural transformation $h \natl k$ we have the following commutative square for the arrows 
$x \sigma h$ and $x \sigma k$:
\[ \xymatrixcolsep{3pc}
\xymatrix{
\cdot \ar[d]_{xg\psi} \ar[r]^{x \sigma h} & \cdot \ar[d]^{(x \sigma \ran)\psi}\\
\cdot \ar[r]_{x \sigma k} & \cdot}
\]
But here $xgh = (x \tau h)\ran = (x \tau \ran)h$, and so $xg \psi = (x \tau \ran) \psi$, so that
\[ (x \sigma h)(x \sigma \ran) \psi = (x \tau \ran)\psi(x \sigma k) \]
and the interchange law \eqref{interchange} does hold.

The projection $(f,\tau) \mapsto f$ is a monoid homomorphism $\endo(A) \ra \homogpd(A,A)$, and is split by the map
$f \mapsto (f,f|_{\obj(A)})$.

Mappings  $\tau : \obj(A) \ra A$ satisfying $(x \tau)\dom = x$ were studied in \cite{ndgflow} and called \emph{flows} on $A$: the idea of studying flows on a 
category originates, however,  in \cite{chase}.   Flows are called  \emph{arrow fields} in \cite{ish} and its sequels, where they are used in a category-theoretic approach to
quantisation.  The set of all flows on $A$ is a monoid $\Phi(A)$: the composition of two flows $\tau, \sigma$ is the
flow $\tau \ast \sigma : x \mapsto (x \tau)((x \tau)\ran)\sigma$.  The structure of the flow monoid of a connected groupoid $A$ is easy to describe using the
isomorphism between $A$ and $E(A) \times L \times E(A)$ for any local subgroup $L$ of $A$.  We first recall that
for a set $X$ and a group $L$,  the {\em wreath product} $L  \wr  \T(X)$ of $L$ with the
full transformation semigroup $\T(X)$ is a semigroup defined as follows.  The underlying
set is $\{ (\lambda,\theta) : \theta \in \T(X), \lambda : X \ra L \}$
and the semigroup operation is
$(\lambda_1,\theta_1)(\lambda_2,\theta_2)
= (\lambda, \theta_1 \theta_2)$ where 
$x \lambda = (x \lambda_1)((x \theta_1) \lambda_2))$.  
Then \cite[Theorem 6.16]{chase} essentially etablishes the following result.

\begin{theorem} 
\label{flowmonoid}
Let $A$ be a groupoid.
\begin{enumerate}
\item If $A$ is the union of connected components $A_i \,, i \in I$, then
$\Phi(A)$ is isomorphic to the direct product 
$\prod_{i \in I} \Phi(A_i)$.
\item The flow semigroup of a connected groupoid $A$ with local subgroup $L$ is 
isomorphic to the wreath product $L \wr \T(\obj(A))$.
\end{enumerate}
\end{theorem}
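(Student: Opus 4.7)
The plan is to handle the two parts independently, using the explicit description of flows and of the flow composition $\tau \ast \sigma$ given immediately before the theorem statement.

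For part (1), the key observation is that if $x \in \obj(A_i)$ then the requirement $(x\tau)\dom = x$ forces $x\tau$ to lie in the connected component of $x$, namely $A_i$. Restriction therefore yields flows $\tau_i := \tau|_{\obj(A_i)} \in \Phi(A_i)$, and conversely any family of flows $\tau_i \in \Phi(A_i)$ assembles into a global flow on $A$.  For composition, if $x \in \obj(A_i)$ and $\tau, \sigma \in \Phi(A)$, then $x\tau \in A_i$, hence $(x\tau)\ran \in \obj(A_i)$ and so $((x\tau)\ran)\sigma \in A_i$; thus $\tau \ast \sigma$ splits componentwise and $\tau \mapsto (\tau_i)_{i \in I}$ is a monoid isomorphism onto the direct product.

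For part (2), I would fix the isomorphism $A \cong \obj(A) \times L \times \obj(A)$ recalled in Section \ref{is_and_og}, under which an arrow $g$ with $g\dom = x$ and $g\ran = y$ is written uniquely as $(x,k,y)$ with $k \in L$, and composition is $(x,k,y)(y,l,z) = (x,kl,z)$. A flow $\tau$ is then completely determined by two maps $\lambda : \obj(A) \to L$ and $\theta : \obj(A) \to \obj(A)$ via $x\tau = (x,\, x\lambda,\, x\theta)$, and this correspondence $\tau \leftrightarrow (\lambda,\theta)$ yields a bijection $\Phi(A) \to L \wr \T(\obj(A))$.

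The remaining step is to verify that this bijection is a monoid homomorphism, which is a direct calculation: if $\tau \leftrightarrow (\lambda_1,\theta_1)$ and $\sigma \leftrightarrow (\lambda_2,\theta_2)$, then
\begin{align*}
x(\tau \ast \sigma) &= (x\tau)\bigl(((x\tau)\ran)\sigma\bigr) = (x,\,x\lambda_1,\,x\theta_1)\bigl(x\theta_1,\,(x\theta_1)\lambda_2,\,(x\theta_1)\theta_2\bigr)\\
&= \bigl(x,\; (x\lambda_1)((x\theta_1)\lambda_2),\; x(\theta_1\theta_2)\bigr),
\end{align*}
which matches the wreath product rule $(\lambda_1,\theta_1)(\lambda_2,\theta_2) = (\lambda,\theta_1\theta_2)$ with $x\lambda = (x\lambda_1)((x\theta_1)\lambda_2)$ verbatim.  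No genuine obstacle arises; the one subtlety is the right-action convention for functions on $\obj(A)$, which is precisely what makes the order of $\theta_1\theta_2$ in the wreath product agree with the order of the factors in the groupoid composition above.
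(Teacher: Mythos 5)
Your proof is correct: part (1) follows from the observation that $(x\tau)\dom=x$ confines $x\tau$ to the component of $x$, and part (2) is the direct computation matching $\tau\ast\sigma$ with the wreath product rule under the identification $A\cong\obj(A)\times L\times\obj(A)$, with the right-action convention handled properly. The paper itself gives no proof of this statement, attributing it to Chase's Theorem 6.16, so your verification simply supplies the standard argument that the citation is standing in for.
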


For the identity functor $\id_A$, we have $(\id_A,\tau) \in \endo(A)$ if and only if
$\tau : \obj(A) \ra A$ is an ordered mapping satisfying $(x \tau)\dom = x$.  
Therefore $\tau$ is an ordered flow on $A$, and the set $\Phi_{\leq}(A)$ of ordered flows on $A$ is a submonoid of $\Phi(A)$.
The the map $\phi \mapsto (\id_A,\phi)$ embeds $\Phi_{\leq}(A)$ as a submonoid of $\endo(A)$, but the structure of $\Phi_{\leq}(A)$
does not seem to be apparent from Theorem \ref{flowmonoid}, since $E(A) \times L \times E(A)$ does not carry the product ordering.

\section{The holomorph}
Generalising the construction of the holomorph $\hol(G)$ of a group $G$, it is now natural to define the \emph{holomorph} $\hol(S)$ of an inverse semigroup $S$ to be the
ordered groupoid $\endo(\vec{S})$.  Identifying $\homogpd(\vec{S},\vec{S})$ as $\prem(S)$ by Proposition \ref{premeq}, we obtain:
\[ \hol(S) = \{ (\alpha,\tau) : \alpha \in \prem(S), \tau:E(S) \ora S \; \text{ with} \; (e \tau)(e \tau)^{-1} = e \alpha \}.\]
We summarise the outcome of the constructions from section \ref{ogpdiscc}.

\begin{theorem}
\label{holops}
\begin{enumerate}[(a)]
\item For an inverse semigroup $S$, its holomorph $\hol(S)$ is a monoidal groupoid in the cartesian closed category of ordered groupoids.  
\item The groupoid composition of
$(\alpha,\tau)$ and $(\beta,\sigma)$ is given by
\[ (\alpha,\tau)(\beta,\sigma) = (\alpha,\psi) \,,\]
defined when, for all $s \in S$, $s \beta = (ss^{-1})\tau^{-1}(s \alpha) (s^{-1}s)\tau$, and where for all $e \in E(S)$, $e \psi = e \tau e \sigma$.  
\item The monoid
composition of $(\alpha,\tau)$ and $(\beta,\sigma)$ is given by 
 \[ (\alpha,\tau) \diamond (\beta,\sigma) = (\alpha \beta, \tau \beta + \sigma) \,,\]
where, for all $e \in E(S)$, $e(\tau \beta + \sigma) = (e \tau \beta)((e \tau)^{-1} (e \tau))\sigma$.
\item There is a monoid action of $\hol(S)$ on $S$ defined, for all $s \in S$, by
\[ s \lhd (\alpha, \tau) = s \alpha (s^{-1}s)\tau \,. \]
\end{enumerate}
\end{theorem}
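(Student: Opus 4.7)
The plan is to specialise the general construction of $\endo(A)$ from section \ref{ogpdiscc} to $A=\vec{S}$, using the Ehresmann--Schein--Nambooripad correspondence and Proposition \ref{premeq} to translate ordered functors and natural transformations into concrete data on the inverse semigroup $S$. The identifications $\obj(\vec{S})=E(S)$, $s\dom=ss^{-1}$, $s\ran=s^{-1}s$, together with the domain/range identity of Corollary \ref{premorph2}, will let me read every groupoid composition in $\vec{S}$ as an ordinary product in $S$.

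Part (a) is then immediate: section \ref{ogpdiscc} has already shown that $\endo(A)$ is an internal monoid in $\homogpd$ for every ordered groupoid $A$, and $\hol(S)=\endo(\vec{S})$ by definition. For (b), I would unpack an ordered natural transformation $\tau:\alpha\natl\beta$: it is an ordered map $E(S)\to S$ satisfying $(e\tau)(e\tau)^{-1}=e\alpha$ for every $e\in E(S)$, and the naturality square applied to an arbitrary arrow $s:ss^{-1}\to s^{-1}s$ of $\vec{S}$ reads
\[
s\alpha\cdot(s^{-1}s)\tau \;=\; (ss^{-1})\tau\cdot s\beta.
\]
Since $(ss^{-1})\tau$ is groupoid-invertible, solving for $s\beta$ produces the stated formula and shows that $\beta$ is determined by $\alpha$ and $\tau$; the groupoid product $(\alpha,\tau)(\beta,\sigma)$ is then just vertical composition of natural transformations, whose $e$-component is $(e\tau)(e\sigma)$, i.e.\ the claimed $e\psi$.

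Part (c) is a direct translation of the horizontal composition $\mu$ described after Corollary \ref{adj_isom}: from $x\phi=(x\tau)g\,((x\tau)\ran)\sigma$, the first coordinate of $(\alpha,\tau)\diamond(\beta,\sigma)$ is $\alpha\beta$, and rewriting $(e\tau)\ran$ as $(e\tau)^{-1}(e\tau)$ gives the second coordinate
\[
e(\tau\beta+\sigma)\;=\;(e\tau)\beta\cdot\bigl((e\tau)^{-1}(e\tau)\bigr)\sigma.
\]
All functoriality and order-preservation checks are inherited from $\endo(\vec{S})$ at no extra cost.

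For (d), the evaluation ordered functor $\ep:\vec{S}\times\endo(\vec{S})\to\vec{S}$ of section \ref{ogpdiscc} specialises to $(s,(\alpha,\tau))\mapsto(s\alpha)(s^{-1}s)\tau$, with the two factors groupoid-composable by Corollary \ref{premorph2}, so the right-hand side is an ordinary product in $S$. That this defines a monoid action is the categorical consequence of $\ep\circ(1\times\mu)=\ep\circ(\ep\times 1)$, and the identity law $s\lhd(\id,\id|_{E(S)})=s$ is trivial. The step I expect to be the main obstacle is the explicit associativity check: writing $t=(s\alpha)(s^{-1}s)\tau$, the expansion $(s\lhd(\alpha,\tau))\lhd(\beta,\sigma)=(t\beta)\cdot\bigl((t^{-1}t)\sigma\bigr)$ only matches the formula for $s\lhd((\alpha,\tau)\diamond(\beta,\sigma))$ dictated by (c) if one can split $t\beta=(s\alpha)\beta\cdot((s^{-1}s)\tau)\beta$. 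This is unlocked by the observation that the two ``middle'' idempotents agree, $(s\alpha)^{-1}(s\alpha)=(s^{-1}s)\alpha=((s^{-1}s)\tau)((s^{-1}s)\tau)^{-1}$, so Proposition \ref{prem_is_hom} applies to the groupoid-composable pair $\bigl(s\alpha,(s^{-1}s)\tau\bigr)$ and splits $t\beta$ as needed; substituting the formula from (c) and simplifying then delivers the identity.
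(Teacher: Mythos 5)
Your proposal is correct and follows essentially the same route as the paper, which offers no separate argument for Theorem \ref{holops} beyond summarising Section \ref{ogpdiscc}: you specialise $\endo(A)$ to $A=\vec{S}$, translate ordered functors and natural transformations via Proposition \ref{premeq} and the identification $\obj(\vec{S})=E(S)$, and read off the formulas from the naturality square, vertical/horizontal composition, and the evaluation functor $\ep$. Your explicit check of the action axiom, splitting $t\beta$ using $(s\alpha)^{-1}(s\alpha)=(s^{-1}s)\alpha=((s^{-1}s)\tau)((s^{-1}s)\tau)^{-1}$ together with Proposition \ref{prem_is_hom} (or directly Proposition \ref{premeq}), is a sound filling-in of a detail the paper leaves implicit.
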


\subsection{The heap operation}
We now consider the heap ternary operation on an inverse semigroup $S$, defined for all $a,b,c \in S$ by
$\< a,b,c \> = ab^{-1}c$.  
Suppose that $\eta : S \ra S$ is an ordered function that preserves $\< \cdots \>$ on $S$, so that for all $a,b,c \in S$ we have
$(ab^{-1}c)\eta = (a \eta)(b \eta)^{-1}(c \eta)$.  Now $ab = a(a^{-1}a)b$ and so $(ab)\eta = (a\eta)((a^{-1}a)\eta)^{-1}(b \eta)$.  We define
a function $\phi: S \ra S$ by $a \phi = a \eta ((a^{-1}a)\eta)^{-1}$.  We note that $\phi$ is ordered.  Take $a,b \in S$ with $a^{-1}a=bb^{-1}$.  Then
\[ (ab)\phi = (ab)\eta ((b^{-1}a^{-1}ab) \eta)^{-1} = (a\eta)((a^{-1}a)\eta)^{-1}(b \eta)((b^{-1}b)\eta)^{-1} =a \phi b \phi.\]
Hence $\phi$ is a premorphism.  Moreover,
\begin{align*}
(a^{-1}a)\phi &= (a \phi)^{-1}(a \phi) = (a^{-1}a)\eta(a \eta)^{-1}a \eta ((a^{-1}a)\eta)^{-1} \\
&= (a^{-1}aa^{-1}a)\eta ((a^{-1}a)\eta)^{-1}\\
&= (a^{-1}a)\eta ((a^{-1}a)\eta)^{-1}
\end{align*}
so that $(\phi,\eta) \in \hol(S)$ and, for all $s \in S$ we have 
\[ s \eta = \< s,s^{-1}s,s^{-1}s \> \eta = s \eta ((s^{-1}s)\eta)^{-1}(s^{-1}s)\eta = s \phi (s^{-1}s)\eta \]
so that $s \eta = s \lhd (\phi,\eta)$.  These considerations establish the following result.
\begin{prop}
\label{schar}
Let $\Sha (S)$ denote the monoid of all ordered functions $S \ra S$ that preserve the heap operation.  Then the mapping $\Sha(S)\ra \hol(S)$ given by
$\eta \mapsto (\phi_\eta,\eta|_{E(S)})$ is an embedding, and for all $s \in S$ we have $a \eta = a \lhd (\phi_{\eta},\eta|_{E(S)})$.
\end{prop}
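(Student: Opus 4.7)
The paragraphs immediately preceding the proposition already do most of the work: given $\eta \in \Sha(S)$ they construct $\phi_\eta$, verify that $\phi_\eta$ is an ordered premorphism, check that $e\phi_\eta = (e\eta)(e\eta)^{-1}$ for $e \in E(S)$ (so that $(\phi_\eta,\eta|_{E(S)}) \in \hol(S)$), and derive the action identity $a\eta = a \lhd (\phi_\eta,\eta|_{E(S)})$. What remains is to note that $\Sha(S)$ is indeed a monoid under composition (clear, since both heap preservation and being ordered are closed under composition and hold for the identity), and then to verify that $\eta \mapsto (\phi_\eta,\eta|_{E(S)})$ is an injective monoid homomorphism.

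Injectivity drops out of the action identity at once: if $\eta,\eta' \in \Sha(S)$ have equal image, then for each $s \in S$ we get
$s\eta = s \lhd (\phi_\eta,\eta|_{E(S)}) = s \lhd (\phi_{\eta'},\eta'|_{E(S)}) = s\eta'$. For the homomorphism property I would exploit the fact (Theorem \ref{holops}(d)) that $\lhd$ is a monoid action of $\hol(S)$ on $S$, together with its faithfulness: for $\eta_1,\eta_2 \in \Sha(S)$, the chain
\begin{align*}
s \lhd \bigl((\phi_{\eta_1},\eta_1|_{E(S)}) \diamond (\phi_{\eta_2},\eta_2|_{E(S)})\bigr) &= \bigl(s \lhd (\phi_{\eta_1},\eta_1|_{E(S)})\bigr) \lhd (\phi_{\eta_2},\eta_2|_{E(S)}) \\
&= s\eta_1\eta_2 = s \lhd (\phi_{\eta_1\eta_2},(\eta_1\eta_2)|_{E(S)})
\end{align*}
combined with faithfulness yields the required equality in $\hol(S)$.

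The main obstacle is therefore faithfulness of $\lhd$. Given $(\alpha,\tau),(\alpha',\tau') \in \hol(S)$ acting identically on $S$, I would first evaluate at $e \in E(S)$: the defining condition $(e\tau)(e\tau)^{-1} = e\alpha$ says that $e\alpha$ is the groupoid domain of $e\tau$, so $e\alpha\cdot e\tau = e\tau$, forcing $e\tau = e\tau'$ and then $e\alpha = e\alpha'$. For a general $s \in S$, Proposition \ref{prem_is_hom} gives $(s\alpha)\ran = (s^{-1}s)\alpha = ((s^{-1}s)\tau)\dom$, so right-multiplying the common value $s\alpha\cdot(s^{-1}s)\tau$ by $(s^{-1}s)\tau^{-1}$ recovers $s\alpha$, yielding $s\alpha = s\alpha'$ and completing the proof.
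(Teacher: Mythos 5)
Your proposal is correct, and its core is exactly the paper's proof: the paper's "proof" of Proposition \ref{schar} consists precisely of the considerations in the preceding paragraph (construction of $\phi_\eta$, verification that it is an ordered premorphism, the computation showing $(\phi_\eta,\eta|_{E(S)}) \in \hol(S)$, and the identity $s\eta = s \lhd (\phi_\eta,\eta|_{E(S)})$), with the word "embedding" left without further comment. What you add beyond the paper is the explicit verification that the map is an injective monoid homomorphism, and in particular your faithfulness lemma for the action $\lhd$: evaluating at idempotents gives $e \lhd (\alpha,\tau) = e\alpha\cdot e\tau = e\tau$ (since $(e\tau)\dom = e\alpha$), which recovers $\tau$ and then $\alpha|_{E(S)}$, and for general $s$ one recovers $s\alpha$ by cancelling $(s^{-1}s)\tau$ on the right, using $(s\alpha)\ran = (s^{-1}s)\alpha = ((s^{-1}s)\tau)\dom$. (For that last identity Corollary \ref{premorph2}, applied to $s^{-1}$, is the more direct citation than Proposition \ref{prem_is_hom}, though either suffices.) This faithfulness-based route to the homomorphism property is clean and buys you something the paper does not spell out; the paper would presumably either verify $(\phi_{\eta_1},\eta_1|_{E})\diamond(\phi_{\eta_2},\eta_2|_{E}) = (\phi_{\eta_1\eta_2},(\eta_1\eta_2)|_{E})$ by direct computation with the formula in Theorem \ref{holops}(c), or regard it as implicit. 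So: same construction and same key identities as the paper, plus a worthwhile supplementary argument closing the gap the paper leaves open.
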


\subsection{Inverse monoids}
Let $M$ be an inverse monoid with identity element $1_M$.  A premorphism $M \ra M$ need not preserve $1_M$ but we do have
$e \theta \leq 1_M \theta$ for all $e \in E(S)$.  As noted in section \ref{ogpdiscc}, if $(\alpha,\tau) \in \hol(M)$ then 
$\tau:E(M) \ra M$ is determined by $1_M \tau$: if $m = 1_M \tau$ and $e \in E(M)$ then $e \tau = (e \alpha)m$.  We can then
replace $\tau$ with $m$.  The definition of the holomorph of $M$ then becomes
\[ \hol(M) = \{ (\alpha,m) : \alpha \in \prem(M), m \in M, mm^{-1} = 1_M \alpha \}.\]
The groupoid composition is given by $(\alpha,m)(\beta,n) = (\alpha,mn)$, defined when $t \beta = m^{-1}(t \alpha)m$
for all $t \in M$, and the monoid composition is given by 
$(\alpha,m) \diamond (\beta,n) = (\alpha \beta,(m \beta)n)$.
This looks like an example of the semidirect product of monoids (see \cite{Nico}) but involving an action by premorphisms rather than
by endomorphisms.  The associativity of $\diamond$ is guaranteed by the considerations in section \ref{ogpdiscc} but can be  
verified directly: for $(\alpha,m), (\beta,n), (\gamma,p) \in \hol(M)$ we have:
\begin{align*}
(\alpha,m) \diamond [(\beta,n)\diamond (\gamma,p)] &= (\alpha \beta \gamma, (m \beta \gamma)(n \gamma)p) \\
\intertext{whereas}
[(\alpha,m) \diamond (\beta,n)] \diamond (\gamma,p) &= (\alpha \beta \gamma, ((m \beta)n) \gamma)p).
\end{align*}
But here $nn^{-1} = 1_M \beta \geq (m^{-1}m)\beta = (m \beta)^{-1}m \beta$ and so by Lemma \ref{prem_is_hom}
$(m \beta \gamma)n \gamma = ((m \beta)n) \gamma$.
The action of $\hol(M)$ on $M$ is given by
$t \lhd (\alpha, m) = (t \alpha)m$. 

\begin{prop}
\label{schar_monoid}
For an inverse monoid $M$, the monoid $\Sha(M)$ is isomorphic to the submonoid $\endomo(M) \ltimes M$ of $\hol(M)$.
\end{prop}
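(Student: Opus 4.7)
The plan is to show that the embedding $\eta \mapsto (\phi_\eta, \eta|_{E(M)})$ from Proposition \ref{schar} lands precisely in the submonoid $\endomo(M) \ltimes M$ of $\hol(M)$, and that the resulting bijection is a monoid homomorphism. For an inverse monoid the flow $\eta|_{E(M)}$ is completely determined by its value $m = 1_M\eta$ (via the formula $e\tau = (e\phi_\eta)m$ recorded in the preceding subsection), so the embedded element is naturally the pair $(\phi_\eta, m)$, and the substantive tasks are: (i) upgrade the premorphism $\phi_\eta$ to an actual semigroup endomorphism; and (ii) exhibit a preimage in $\Sha(M)$ for every $(\alpha, m) \in \endomo(M) \ltimes M$.

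The key identity is obtained by applying $\eta$ to the trivial heap expression $ab = \<a, 1_M, b\>$, which yields $(ab)\eta = (a\eta)\,m^{-1}\,(b\eta)$. This alone shows that the function $\phi: a \mapsto a\eta \cdot m^{-1}$ is a semigroup homomorphism $M \to M$, and it remains to verify $\phi_\eta = \phi$. Expanding $((a^{-1}a)\eta)^{-1}$ via the same heap identity and rewriting $a^{-1}\eta = \<1_M, a, 1_M\>\eta = m\,(a\eta)^{-1}\,m$, one rearranges $\phi_\eta(a)$ to $[a\eta\,(a\eta)^{-1}]\cdot (mm^{-1}) \cdot (a\eta) \cdot m^{-1}$; the inequality $(a\eta)(a\eta)^{-1} \leq mm^{-1}$, which follows by applying $\eta$ to $a = \<1_M, 1_M, a\>$, together with commutativity of idempotents then collapses this expression to $a\eta \cdot m^{-1} = \phi(a)$. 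Thus $\phi_\eta \in \endomo(M)$, and since $mm^{-1} = 1_M\phi_\eta$ holds automatically from membership of $\hol(M)$, we obtain $(\phi_\eta, m) \in \endomo(M) \ltimes M$.

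For surjectivity, given $(\alpha, m) \in \endomo(M) \ltimes M$, define $\eta_{\alpha,m}(a) = (a\alpha)m$. Heap-preservation reduces, using that $\alpha$ is a homomorphism, to the cancellation $(a\alpha)(mm^{-1})(b\alpha)^{-1}(c\alpha) = (a\alpha)(b\alpha)^{-1}(c\alpha)$, which holds because the range idempotent $(a^{-1}a)\alpha$ of $a\alpha$ is dominated by $1_M\alpha = mm^{-1}$. The map $\eta_{\alpha,m}$ is plainly ordered, and a short check gives $\phi_{\eta_{\alpha,m}} = \alpha$ and $1_M\eta_{\alpha,m} = m$, so $(\alpha, m) \mapsto \eta_{\alpha,m}$ is the inverse correspondence. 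The monoid homomorphism property then follows from the computation $(\eta_{\alpha_1,m_1}\circ \eta_{\alpha_2,m_2})(a) = ((a\alpha_1) m_1)\alpha_2\, m_2 = a(\alpha_1\alpha_2)\,(m_1\alpha_2)\,m_2$, matching the formula $(\alpha_1, m_1)\diamond(\alpha_2, m_2) = (\alpha_1\alpha_2,(m_1\alpha_2) m_2)$ derived in the text.

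The main obstacle is step (i), the identification $\phi_\eta = \phi$: it requires several successive rewritings via the heap identity together with careful bookkeeping of idempotent inequalities, and is the decisive point where heap-preservation, rather than the weaker premorphism condition, is exploited. Once this identity is established the surjectivity and multiplicativity steps are routine.
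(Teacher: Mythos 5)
Your proposal is correct, and it is organised differently from the paper's proof, in a way worth noting. The paper works on the $\hol(M)$ side: it takes a pair $(\alpha,m)$ whose action $t \mapsto (t\alpha)m$ preserves the heap, cancels $m$ using $mm^{-1}=1_M\alpha$ to conclude that $\alpha$ itself preserves the heap, and then applies the identity $ac=\<a,a^{-1}a,c\>$ together with Lemma \ref{premorph1} and Proposition \ref{premeq} to show $\alpha\in\endomo(M)$; the surjectivity onto $\endomo(M)\ltimes M$ and the multiplicativity are left essentially implicit. You instead work on the $\Sha(M)$ side, exploiting the monoid identity through the heap expressions $ab=\<a,1_M,b\>$, $a^{-1}=\<1_M,a,1_M\>$ and $a=\<1_M,1_M,a\>$: the identity $(ab)\eta=(a\eta)m^{-1}(b\eta)$ makes $a\mapsto (a\eta)m^{-1}$ visibly an endomorphism, and your idempotent bookkeeping correctly shows it coincides with the premorphism $\phi_\eta$ of Proposition \ref{schar} (I checked the rearrangement $\phi_\eta(a)=(a\eta)(a\eta)^{-1}(mm^{-1})(a\eta)m^{-1}$ and the inequality $(a\eta)(a\eta)^{-1}\leq mm^{-1}$; both are right). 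You then supply explicitly what the paper omits: the inverse assignment $(\alpha,m)\mapsto\eta_{\alpha,m}:a\mapsto(a\alpha)m$, the verification that it is ordered and heap-preserving (your use of $(a^{-1}a)\alpha\leq 1_M\alpha=mm^{-1}$ is fine, though $a\alpha\cdot 1_M\alpha=(a1_M)\alpha=a\alpha$ is quicker), the recovery $\phi_{\eta_{\alpha,m}}=\alpha$, $1_M\eta_{\alpha,m}=m$, and the compatibility of composition with $\diamond$. So your argument trades the paper's identity $\<a,a^{-1}a,c\>$ for the monoid-specific $\<a,1_M,b\>$, which shortens the endomorphism step at the cost of the reconciliation $\phi_\eta(a)=(a\eta)m^{-1}$, and it has the merit of making the bijectivity and homomorphism claims fully explicit rather than inherited tacitly from Proposition \ref{schar}.
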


\begin{proof}
By Proposition \ref{schar} we have an embedding $\Sha(M) \ra \hol(M)$ such that the action of $\eta \in \Sha(M)$ on $M$ is
given by the action of the image $(\phi_{\eta},\eta|_{E(S)})$ of $\eta$ in $\hol(M)$.  Now suppose that $(\alpha,m)$
preserves the heap operation, so that for all $a,b,c \in M$ we have
\begin{align*} (ab^{-1}c)\alpha \cdot m &= a \alpha \cdot mm^{-1} (b \alpha)^{-1} c\alpha \cdot m= a \alpha \cdot 1_M \alpha (b \alpha)^{-1} c\alpha
\cdot m \\ &= a\alpha(b \alpha)^{-1} c \alpha \cdot m \,.\end{align*}
Hence 
\begin{align*} (ab^{-1}c)\alpha &= (ab^{-1}c)\alpha 1_M \alpha =  (ab^{-1}c)\alpha mm^{-1} = a\alpha(b \alpha)^{-1} c \alpha \cdot mm^{-1}\\
&= a\alpha(b \alpha)^{-1} c \alpha \cdot 1_M \alpha = a\alpha(b \alpha)^{-1} c \alpha \,, \end{align*}
and so $\alpha \in \Sha(M)$.  But then
\[ (ac)\alpha = \< a, a^{-1}a,c \> \alpha = \< a \alpha, (a^{-1}a)\alpha, b \alpha \> = (a \alpha)((a^{-1}a)\alpha)^{-1}(b \alpha) \,.\]
But $((a^{-1}a)\theta)^{-1} = (a^{-1}a)\theta$ (by Lemma \ref{premorph1}(a)) and then $(a \theta)((a^{-1}a)\theta) = (aa^{-1}a)\theta = a\theta$
by Proposition \ref{premeq}.  $So (ac)\theta = (a \theta)(c \theta)$ and $\alpha \in \endomo(M)$. 
\end{proof}

%
%

\section{Examples}

\subsection{Semilattices of groups}
Let $E$ be a semilattice and $S = (\G,E)$ be a semilattice of groups, with linking maps $\alpha^e_f$ for $e \leq f$ in $E$.  Here $\G$ assigns a group $G_e$ to
each $e \in E$ and $\alpha^e_f$ is a group homomorphism $G_e \ra G_f$.  We have $\alpha^e_e = \id$ for all $e \in E$, and whenever $e \leq f \leq k$ in $E$ then
$\alpha^k_f \alpha^f_e  = \alpha^k_e$.  The product in $S$ of  $g \in G_x$ and $h \in G_y$ is $g\alpha_{xy}^x h\alpha_{xy}^y \in G_{xy}$.  The inductive groupoid
 $\vec{S}$ is a disjoint union of groups
and is ordered by $g \geq g \alpha_f^e$ whenever $e,f \in E$ with $e \geq f$ and $g \in G_e$.

A premorphism of $\vec{S}$ is specified by an order-preserving map $\lambda:E \ra E$ and a family $\phi$ of group homomorphisms $\phi_e : G_e \ra G_{e \lambda}$ such that, if  $e \geq f$ then
$\phi_e \alpha^{e \lambda}_{f \lambda} = \alpha^e_f \phi_f$.  A construction of McAlister \cite[Proposition 4.6]{McA1} shows that premorphisms from $S$ to an inverse semigroup $T$ are in one-to-one
correspondence with idempotent separating homomorphisms from $S$ to a certain semilattice of groups $(\K,E)$ constructed from $T$.  Suppose that $(\lambda,\phi)$ specifies a premorphism $\vec{S} \ra \vec{S}$.
Then $(\K,E)$ obtained as follows: $K_e = G_{e \lambda}$ and the linking map $\beta^e_f$ is equal to $\alpha^{e \lambda}_{f\lambda}$.  Then we obtain
an idempotent separating homomorphism $\sigma : S \ra U$ as follows: we set $\sigma_e = \phi_e  : G_e \ra K_e$, which is clearly idempotent separating.  Then if $g \in G_x$ and 
$h \in G_y$,
\begin{align*}
(gh) \sigma &= ((g \alpha^x_{xy})(h \alpha^y_{xy}))\phi_{xy}\\
&= (g \alpha^x_{xy})\phi_{xy} (h \alpha^y_{xy})\phi_{xy} \\
&= (g \phi_x \alpha^{x \lambda}_{(xy)\lambda})(h \phi_y \alpha^{y \lambda}_{(xy)\lambda}) \\
& = (g \phi_x \beta^x_{xy})(h \phi_y \beta^y_{xy}) \\
&= (g \sigma)(h \sigma).
\end{align*}

Endomorphisms of $S$ are specified by pairs $(\lambda,\phi)$ in which $\lambda$ is meet-preserving.

We have
\[ \hol(\G,E) = \{ (\lambda,\phi,\tau) : (\lambda,\phi) \in \prem(\G,E), \tau: E \ra (\G,E), e\tau \in G_{e \lambda} \}. \]
Since $\tau$ is ordered, it determines a compatible family of elements of the groups $G_{e \lambda}$, in the sense that if $e \geq f$ in $E$ then
$f \tau = (e \tau) \alpha^{e \lambda}_{f \lambda}$.  

Let $(\lambda,\phi,\tau) \in \hol(\G,E)$ and take $a \in G_x, b \in G_y$ and $c \in G_z$.  Then $\<a,b,c \> \lhd (\lambda,\phi,\tau) \in G_{(xyz)\lambda}$
whereas $\< a\lhd (\lambda,\phi,\tau),b\lhd (\lambda,\phi,\tau),c\lhd (\lambda,\phi,\tau)\> \in G_{(x \lambda)(y \lambda)(z \lambda)}$.  Hence
if $(\lambda,\phi,\tau) \in \Sha(\G,E)$  then $\lambda$ is meet-preserving and  $(\lambda,\phi) \in \endomo(\G,E)$, and so the action of $ (\lambda,\phi)$ preserves $\< \cdots \>$.
But then 
\begin{align*}
\< a\rhd & (\lambda,\phi,\tau),b  \lhd (\lambda,\phi,\tau),c\lhd (\lambda,\phi,\tau)\> = \\
&  (a \phi_x (x \tau)) \alpha^{x \lambda}_{(xyz)\lambda}
 (b \phi_y (y \tau))^{-1} \alpha^{y \lambda}_{(xyz)\lambda}
 (c \phi_z (z \tau)) \alpha^{z \lambda}_{(xyz)\lambda} \\
&= (a \alpha^x_{xyx} \phi_{xyz} ) (x \tau \alpha^{x \lambda}_{(xyz)\lambda})
 (y \tau \alpha^{y \lambda}_{(xyz)\lambda})^{-1}  (b \alpha^y_{xyx} \phi_{xyz} )^{-1}
( c \alpha^z_{xyx} \phi_{xyz} ) (z \tau \alpha^{z \lambda}_{(xyz)\lambda}) \\
&= (a \alpha^x_{xyx} \phi_{xyz} )  (b \alpha^y_{xyx} \phi_{xyz} )^{-1}
 (c \alpha^z_{xyx} \phi_{xyz} ) (z \tau \alpha^{z \lambda}_{(xyz)\lambda})\\
\intertext{since $x \tau \alpha^{x \lambda}_{(xyz)\lambda} = (xyz) \tau =  y \tau \alpha^{y \lambda}_{(xyz)\lambda}$}
&= (a \alpha^x_{xyz})( b^{-1} \alpha^y_{xyz})( c \alpha^z_{xyz}) \phi_{xyz} (xyz)\tau\\
&= \< a,b,c \> \lhd (\lambda,\phi,\tau).
\end{align*}
Therefore we have
\[ \Sha(\G,E) = \{ (\lambda,\phi,\tau) : (\lambda,\phi) \in \endomo(\G,E), \tau: E \ra (\G,E), e\tau \in G_{e \lambda} \}. \]

\subsection{The bicyclic monoid}
The bicyclic monoid $B$ is the inverse monoid presented by $\< a:aa^{-1}=1 \>$.  Its idempotents are the elements of the form $a^{-n}a^n,
n \geq 0$, and as a semilattice is an infinite descending chain.  By Proposition \ref{prem_is_hom} every premorphism of $B$  is an endomorphism,
and the endomorphisms of $B$ were described in \cite{Wa}.
Each endomorphism $\nu$ of $B$  is determined by the image of $a$. If $a\nu = a^{-p}a^q$ then $1 \nu = (aa^{-1})\nu=a^{-p}a^p$ and 
$(a^{-1}a)\nu = a^{-q}a^q$.  Since $1 \geq a^{-1}a$ we must have $p \leq q$.  It then follows that
\[ (a^{-i}a^j)\nu = (a^{-p}a^q)^{-i}(a^{-p}a^q)^j=a^{-i(q-p)-p}a^{j(q-p)+p} = a^{-ik-p}a^{jk+p} \]
where $k=q-p$, and so $k \geq 0$, $p \geq 0$.  Hence $\endomo(B)$ is isomorphic to the monoid $\aff(\N)$ of affine transformations of $\N$.  If $(\nu,a^{-l}a^m) \in \hol(B)$
then $a^{-l}a^l = 1 \nu = a^{-p}a^p$, so that $l=p$ but $m \in \N$ is arbitrary.  It follows that
\[ \hol(B) \cong \aff(\N) \ltimes \N \,.\]

\subsection{Polycyclic monoids}
The polycyclic monoids $P_n$ for $n \geq 2$ were introduced by Nivat and Perrot in \cite{NP}. Set $A = \{ a_1, \dotsc , a_n \}$.
Then $P_n$ is the inverse semigroup
with zero presented by
\[ \< A : a_ia_i^{-1}=1, a_ia_j^{-1}=0 \; (i \ne j) \> \,, \]
and its non-zero elements are uniquely representable in the form $u^{-1}v$ for $u,v \in A^*$.
We shall generalise the description of $\hol(B)$ above by computing $\prem(P_n)$.  

Affine transformations of $\N$ generalise to affine maps of a monoid $M$.
An \emph{affine} map
on $M$ is the composition of an endomorphism of $M$ and a right translation: if $\alpha$ is affine then
there exists an endomorphism $\sigma$ and an element $m \in M$ such that, for all $x \in M$,
$\alpha: x \mapsto (x \sigma)m$.  The set of affine maps $\aff(M)$ is then a monoid, and if $M$ is right cancellative it is isomorphic to the semidirect product $\endomo(M) \ltimes M$ of 
$\endomo(M)$ and $M$ (with the natural action of $\endomo(M)$ on $M$).

The ordered groupoid $\vec{P_n}$ can be identified as $\vec{P_n} = \Delta A^* \cup \{ 0 \}$,
where $\Delta A^*$ is the \emph{simplicial groupoid} $A^* \times A^*$ in which a composition $(p,q)(u,v)$ is defined
if and only if $q=u$, and then $(p,q)(q,v)=(p,v)$.  Identity arrows in $\Delta A^*$ (corresponding to non-zero idempotents
in $P_n$) have the form $(u,u), u \in A^*$ and so we identify $E(P_n)$ as $A^* \cup \{ 0 \}$.  The ordering on $A^*$ is the
suffix ordering: 
\[ w \leq u \; \text{if and only if} \; w=pu \; \text{for some} \; p \in A^* \]
with, of course, $0 \leq u$ for all $u \in A^*$. The ordering on $\Delta A^*$ is then
$(pu,pv) \leq (u,v)$ for all $p,u,v \in A^*$.   Since simplicial groupoids are free \cite{pjhbook}, a functor
$\vec{P_n} \ra \vec{P_n}$ is determined by a mapping $E(P_n) \ra E(P_n)$.  

If an ordered functor $\phi$ maps some $u \in A^*$ to
$0$ then it must map the connected component $\Delta A^*$ to $0$.  So we may assume that $\phi :  A^* \ra A^*$.
For $\phi$ to be an ordered mapping it must
be suffix-preserving on $A^*$: that is, if $w=pu$ then $w \phi = q(u \phi)$ for some $q \in A^*$,  where $q$ is uniquely
determined by $p,u$ and $\phi$. The assignment $p \mapsto q$ gives another function $u \rhd \phi: A^* \ra A^*$ that is
also suffix-preserving. 

Let $\S$ be the monoid of all suffix-preserving maps $A^* \ra A^*$.  Then we have a map $X^* \times \S \ra \S$,
$(u,\phi) \mapsto u \rhd \phi$, and the natural (right) action of $\S$ on $A^*$ giving a map $A^* \times \S \ra A^*$.

\begin{prop}
\label{zs_pr}
For all $u,v \in A^+$ and $\phi,\psi \in \S$ we have:
\begin{itemize}
\item $(uv) \rhd \phi = u \rhd (v \rhd \phi)$,
\item $u \rhd \phi \psi = (u \rhd \phi)(u \phi \rhd \psi)$,
\item $u(fg) = (uf)g$,
\item $(uv)\phi = u(v \rhd \phi)(v \psi)$.
\end{itemize}
It follows that the set $\S \times A^*$ is a semigroup with composition 
\[ (\phi,u)(\psi,v) = (\phi(u \rhd \psi),(u \psi)v) \,. \]
\end{prop}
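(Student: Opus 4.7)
The plan is to recognise identity (4) as essentially the defining property of $v \rhd \phi$: by construction, writing any word as $pu$ with distinguished suffix $u$, we have $(pu)\phi = p(u \rhd \phi)\cdot u\phi$, and specialising $p \mapsto u$ and $u \mapsto v$ gives (4) (modulo an evident typo $v\psi \to v\phi$). Identity (3) is just associativity of function composition at the point $u$. So (3) and (4) are essentially free from the set-up, and I would state them with one line of justification each.

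To obtain identity (1), I would expand $(puv)\phi$ in two ways using (4). First, treating $uv$ as a single suffix, $(puv)\phi = p((uv)\rhd\phi)\cdot(uv)\phi$. Second, peeling off $v$ then $u$: $(puv)\phi = (pu)(v\rhd\phi)\cdot v\phi$ and then $(pu)(v\rhd\phi) = p(u\rhd(v\rhd\phi))\cdot u(v\rhd\phi)$, and combining, the trailing factors $u(v\rhd\phi)\cdot v\phi$ equal $(uv)\phi$ by another application of (4). Right-cancelling $(uv)\phi$ in the free monoid $A^*$ gives $p((uv)\rhd\phi) = p(u \rhd (v \rhd \phi))$ for every $p$, which is (1). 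A parallel two-way computation of $(pu)(\phi\psi)$, either directly via (4), or as $((pu)\phi)\psi = (p(u\rhd\phi)\cdot u\phi)\psi = p(u\rhd\phi)((u\phi)\rhd\psi)\cdot (u\phi)\psi$, followed by right-cancellation of $(u\phi)\psi = u(\phi\psi)$, yields (2).

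Given the identities, I verify associativity of the operation on $\S \times A^*$ by matching components. Unfolding $((\phi,u)(\psi,v))(\chi,w)$ and $(\phi,u)((\psi,v)(\chi,w))$, the first components agree via (2) applied to $u \rhd \psi(v\rhd\chi)$ followed by (1) rewriting $u\psi \rhd (v\rhd\chi)$ as $(u\psi)v \rhd \chi$; both sides reduce to $\phi\cdot(u\rhd\psi)\cdot((u\psi)v\rhd\chi)$. The second components require $((u\psi)v)\chi\cdot w = u(\psi(v\rhd\chi))\cdot(v\chi)w$, which follows from applying (4) to $((u\psi)v)\chi$ and (3) to turn $u(\psi(v\rhd\chi))$ into $(u\psi)(v\rhd\chi)$.

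The main obstacle is purely notational bookkeeping: the juxtaposition $u\phi$ is used both for applying a function in $\S$ to an element of $A^*$ and, in conjunction with concatenation, inside longer strings, and one must track at each step which role it plays and where concatenation is happening. Once (4) is identified as the defining property of $v \rhd \phi$ and right cancellativity in the free monoid $A^*$ is invoked consistently to strip off the common suffixes that (4) exposes, each derivation is a short and mechanical rearrangement.
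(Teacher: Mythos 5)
Your proof is correct, and it is exactly the routine verification the paper intends: Proposition \ref{zs_pr} is stated without proof as an immediate consequence of the defining identity $(pu)\phi = p(u\rhd\phi)\,(u\phi)$ (your item (4), where you rightly note the typo $v\psi$ for $v\phi$), with (1) and (2) obtained by expanding $(puv)\phi$ and $(pu)(\phi\psi)$ in two ways and cancelling the common suffix in the free monoid $A^*$, and associativity of the Zappa composition checked componentwise just as you do. No gaps.
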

We denote the semigroup in Proposition \ref{zs_pr} by $\S \bowtie A^*$.  It is an example of a \emph{Zappa product} of
semigroups  \cite{Kun}.  Now $X^*$ embeds in $\S$ as the submonoid of right-multiplication maps: $w \mapsto \rho_w \in \S$
where $u \rho_w=uw$, and for all $v,w \in A^*$ we have $v \rhd \rho_w = 1_{\S}$.

\begin{lemma}
\label{map_from_zs}
The mapping $\mu: \S \bowtie A^* \ra \S$ given by $(\phi,u) \mapsto \phi \rho_u$ is a semigroup homomorphism.
\end{lemma}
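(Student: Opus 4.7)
The plan is to verify the identity $\mu((\phi,u)(\psi,v)) = \mu(\phi,u)\mu(\psi,v)$ in $\S$ by evaluating both sides on an arbitrary element $w \in A^*$. Unfolding the Zappa product definition gives
\[ \mu((\phi,u)(\psi,v)) = \phi(u \rhd \psi)\rho_{(u\psi)v}, \]
while the right-hand side is simply the composition $\phi \rho_u \psi \rho_v$ in $\S$, so the task is to show that these two elements of $\S$ act identically on each $w$.

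The key step is to expand $w(\phi \rho_u \psi \rho_v)$ in stages: applying $\phi$ and then $\rho_u$ produces the string $(w\phi)u$, and the crucial move is to apply $\psi$ to this concatenation using the last identity of Proposition \ref{zs_pr}, namely $(ab)\psi = a(b \rhd \psi)(b\psi)$, with $a = w\phi$ and $b = u$. This rewrites $((w\phi)u)\psi$ as $(w\phi)(u \rhd \psi)(u\psi)$, after which the final $\rho_v$ appends $v$ to produce $(w\phi)(u \rhd \psi)(u\psi)v$. Expanding the other side $w(\phi(u \rhd \psi)\rho_{(u\psi)v})$ step by step -- apply $\phi$, apply $u \rhd \psi$, then concatenate $(u\psi)v$ -- yields exactly the same string.

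The argument is essentially a single application of this decomposition formula together with careful bookkeeping. The only potential obstacle is notational: juxtaposition is overloaded to denote both concatenation in $A^*$ and the right action of elements of $\S$ on $A^*$, so one must parse each expression unambiguously. Once this is done, the verification requires no further machinery and $\mu$ is a semigroup homomorphism.
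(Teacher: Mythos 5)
Your verification is correct and is exactly the routine pointwise computation the paper leaves to the reader (the lemma is stated without proof): both sides send $w$ to $(w\phi)(u \rhd \psi)\,(u\psi)\,v$, using the suffix-decomposition $(pu)\psi = p(u \rhd \psi)\,(u\psi)$. You have also, rightly, used the corrected form of the last identity in Proposition \ref{zs_pr} (the paper's ``$(uv)\phi = u(v \rhd \phi)(v\psi)$'' is a typo for $(uv)\phi = u(v \rhd \phi)(v\phi)$), so no gap remains.
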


Now the functor $\vec{P_n} \ra \vec{P_n}$ determined by $\phi$ maps $(u,v) \in \Delta A^*$ to $(u \phi,v \phi)$.
If this is an ordered functor, then for all $p,u,v \in A^*$, 
\[ ((pu)\phi,(pv)\phi) = (q(u \phi),q(v \phi)) \; \text{for some} \; q \in A^* \,.\]
But $q=p(u \rhd \phi)=p(v \rhd \phi)$ and so, for all $u,v \in A^*$ we have
$u \rhd \phi = v \rhd \phi$.  In particular, all the maps $w \rhd \phi$ are equal to $1 \rhd \phi$, where
$1 \rhd \phi$ maps $p \in A^*$ to the prefix to $1\phi$ in $p\phi$:
\[ p \phi = p(1 \rhd \phi)(1 \phi)  \]
and so $w \rhd (1 \rhd \phi) = w \rhd \phi = 1 \rhd \phi$. It follows that $1 \rhd \phi$ is an endomorphism of
$A^*$:
\[ (uv)(1 \rhd \phi) = u(v \rhd \phi) \cdot v(1 \rhd \phi) = u(1 \rhd \phi) \cdot v(1 \rhd \phi) .\]

Suppose that $\phi: 0 \mapsto w \in A^*$.  Then for all $u \in A^*$ we have $w \leq u\phi \leq 1 \phi$.
Suppose that for some $v \in A^*$ we have $v \phi \ne 1 \phi$, and so $v(1 \rhd \phi) \ne 1$. But then, for $m \geq 1$,
\[ (v^m)\phi = (vv^{m-1})\phi = v(v^{m-1} \rhd \phi) (v^{m-1})\phi = v(1 \rhd \phi)(v^{m-1})\phi.\]
Hence the sequence of lengths $(|(v^m)\phi|)$ is strictly increasing, but also bounded below by $|w|$.  This is a 
contradiction, and so for all $v \in A^*$ we have $v \phi = 1 \phi$.

Therefore, there are three types of ordered functors $\vec{P_n} \ra \vec{P_n}$ determined by the following three types of ordered
function $\phi : A^* \cup \{ 0 \} \ra A^* \cup \{ 0 \}$:
\begin{itemize}
\item the constant function $c_0$ with value $0$,
\item functions $c_{w,t}$ that map $0 \mapsto w \in A^*$ and with $u c_{w,t} = t$ for all $u \in A^*$ and some fixed $t \in A^*$ with
$w \leq t$,
\item functions $\phi$ that map $0 \mapsto 0$ and map $A^* \stackrel{\phi}{\ra} A^*$. In this case  $u \phi = u(1 \rhd \phi)(1 \phi)$
with $w \rhd \phi = 1 \rhd \phi$ an endomorphism of $A^*$, and so $\phi = (1 \rhd \phi)\rho_{1 \phi}$.  Hence we have the monoid
\[ \aff(A^*) = \{ \sigma \rho_u : \sigma \in \endomo(A^*), u \in A^* \} \subset \S ,\]
and the restriction of the map $\mu$ in Lemma \ref{map_from_zs} to the semidirect product $\endomo(A^*) \ltimes A^* \subset \S \bowtie A^*$
is an isomorphism.
\end{itemize}

The functor determined by $c_0$ is constant at $0$ and acts as a zero in $\S$, and the composition rule for the $c_{v,t}$
is $c_{u,s}c_{v,t}=c_{t,t}$.  Hence the mappings $c_{w,t},c_0$ determine a subsemigroup $\C$ of $\prem(P_n)$, and $\C$ is an ideal
of $\prem(P_n)$ since
$c_{v,t}\sigma \rho_w = c_{(v\sigma)w,(t \sigma)w} \; \text{and} \; \sigma \rho_w c_{v,t} = c_{v,t}$.

\begin{prop}
\label{prem_polycyclic}
The monoid $\prem(P_n)$ of ordered premorphisms of the polycyclic monoid $P_n$ is an ideal extension of the subsemigroup $\C$ by the
monoid of affine maps $\aff(A^*)$.
\end{prop}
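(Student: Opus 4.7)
The plan is to collect the structural information established in the paragraphs preceding the statement and repackage it as the claimed ideal extension. The classification already gives that every ordered functor $\vec{P_n} \ra \vec{P_n}$ is exactly one of three kinds: the constant functor $c_0$, a functor $c_{w,t}$ with $w \leq t$ in $A^*$, or an affine-type functor $\sigma\rho_u$ with $\sigma \in \endomo(A^*)$ and $u \in A^*$. The first step is therefore to identify $\prem(P_n)$ with this disjoint union, invoking Proposition \ref{premeq} to certify that each such ordered functor really is a premorphism: for $c_0$ and $c_{w,t}$ this is immediate because products land in idempotents, and for $\sigma\rho_u$ it reduces to the evident multiplicativity of an endomorphism of $A^*$ post-composed with a right translation.

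Next I would show that the affine-type functors form a submonoid isomorphic to $\aff(A^*) = \endomo(A^*) \ltimes A^*$. Composing $\sigma\rho_u$ with $\tau\rho_v$ pointwise gives $w \mapsto (w\sigma\tau)(u\tau)v$, so the product is $(\sigma\tau)\rho_{(u\tau)v}$, which matches the semidirect product rule and stays inside the affine class. Lemma \ref{map_from_zs} then supplies the isomorphism, its inverse $\sigma\rho_u \mapsto (\sigma,u)$ being recovered from $\sigma = 1 \rhd (\sigma\rho_u)$ and $u = 1(\sigma\rho_u)$.

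For the ideal property, the three composition rules $c_{u,s}c_{v,t} = c_{t,t}$, $c_{v,t} \cdot \sigma\rho_w = c_{(v\sigma)w,(t\sigma)w}$, and $\sigma\rho_w \cdot c_{v,t} = c_{v,t}$, combined with the absorbing behaviour of $c_0$, already appear just above the statement; they show that any product in $\prem(P_n)$ involving a factor from $\C$ lies in $\C$, so $\C$ is a two-sided ideal. Forming the Rees quotient $\prem(P_n)/\C$ collapses $\C$ to a single zero while leaving the complementary submonoid $\aff(A^*)$ intact, which is exactly the ideal extension structure asserted. The only substantive obstacle, namely ruling out hybrid ordered functors via the length-bounding argument on $|(v^m)\phi|$, has already been disposed of, so the proof of the proposition itself is essentially bookkeeping on top of the classification.
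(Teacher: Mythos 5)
Your proposal is correct and follows the paper's own route: the paper gives no separate proof, since the proposition is exactly the summary of the preceding classification of ordered functors (via Proposition \ref{premeq}), the identification of the affine class with $\endomo(A^*)\ltimes A^*$ through Lemma \ref{map_from_zs}, and the displayed composition rules showing $\C$ is an ideal. Your reorganisation of that material, including the Rees quotient observation, is exactly the intended bookkeeping.
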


Elements of $\hol(P_n)$ are also of one of three types, determined by the types of elements of $\prem(S)$.  These types are:
\begin{itemize}
\item the element $(c_0,0)$,
\item elements $(c_{w,s},(s,t))$ for $w,s,t \in A^*$ with $s$ a suffix of $w$,
\item elements $(\sigma \rho_u, (u,v))$ with $\sigma \in \endomo(A^*)$ and $u,v  \in A^*$.
\end{itemize}
Certainly $(c_0,0)$ preserves the heap ternary operation, and by Proposition \ref{schar_monoid},

Now for all $(u,v) \in A^*$ we have $(u,v) \lhd  (c_{w,s},(s,t)) = (s,s)(s,t) = (s,t)$ and so
\begin{align*}
 \< (u_1,v_1)  \lhd  (c_{w,s},(s,t)), (u_2,v_2) &  \lhd  (c_{w,s},(s,t)), (u_3,v_3)  \lhd  (c_{w,s},(s,t)) \> \\
&= \< (s,t),(s,t),(s,t) \> = (s,t) .\end{align*}
Hence non-zero values of $\< \dots \>$ are preserved by $(c_{w,s},(s,t))$, but since $0 \lhd (c_{w,s},(s,t)) = (w,w)$ then
instances of $\< \dots \>$ evaluating to $0$ are preserved by $(c_{w,s},(s,t))$ if and only if $w=s=t$.  However,
$(c_{w,w},(w,w))$ acts on $P_n$ in the same way as $(c_1,(w,w)) \in \endomo(P_n) \ltimes P_n$, where $c_1$ is constant at $1$.

We may also determine which premorphisms of $P_n$ are endomorphisms.

\begin{prop}
\label{endo_poly}
Let $\alpha$ be an endomorphism of $P_n$, $(n \geq 2)$. Then either:
\begin{itemize}
\item $\alpha$ is a constant map $c_w$ to some idempotent $w \in E(P_n)$, or:
\item $\alpha: 0 \mapsto 0$ and $(u,v) \mapsto (u \phi,v \phi)$, where $\phi = \sigma \rho_w : A^* \ra A^*$ and $\sigma$ is an injective
endomorphism $A^* \ra A^*$ such that $A \sigma$ is a suffix code.
\end{itemize} 
\end{prop}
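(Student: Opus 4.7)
The plan is to exploit Proposition \ref{prem_polycyclic}, since every endomorphism of $P_n$ is in particular a premorphism. Thus $\alpha$ must be one of the three types already enumerated there: the constant-at-$0$ map $c_0$, a collapsing map $c_{w,t}$ with $w \leq t$, or an affine map $\sigma\rho_u$ with $\sigma \in \endomo(A^*)$ and $u \in A^*$. I would then impose the multiplicative condition $(xy)\alpha = (x\alpha)(y\alpha)$ on suitably chosen probe elements in each case, and verify sufficiency of the resulting conditions.

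For the collapsing case, evaluate $\alpha$ at the zero product $a_i a_j^{-1} = 0$ (with $i \ne j$, possible since $n \geq 2$): the left side is $0\alpha = w$, while the right is $t \cdot t = t$. This forces $w = t$, so $\alpha = c_w$ is a constant map to the idempotent $w$; taking $w = 0$ recovers $c_0$.

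For the affine case, I would use that $a_i \in P_n$ corresponds to the arrow $(1, a_i)$ in $\vec{P_n}$ and $a_i^{-1}$ to $(a_i, 1)$, so that $a_i\alpha = u^{-1}(a_i\sigma u)$ and $a_i^{-1}\alpha = u^{-1}(a_i\sigma)^{-1}u$; a direct computation then gives $a_i\alpha \cdot a_j^{-1}\alpha = u^{-1}(a_i\sigma)(a_j\sigma)^{-1}u$. For $i \ne j$ the left side $(a_i a_j^{-1})\alpha = 0$; since conjugation by $u$ sends $v_1^{-1}v_2$ to $(v_1 u)^{-1}(v_2 u)$ and hence does not annihilate non-zero elements, this forces $(a_i\sigma)(a_j\sigma)^{-1} = 0$, i.e.\ neither $a_i\sigma$ nor $a_j\sigma$ is a suffix of the other. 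Varying $i \ne j$, this is precisely the statement that $A\sigma$ is a suffix code, which in particular forces $\sigma|_A$ and hence $\sigma$ itself to be injective on $A^*$.

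Finally I would verify sufficiency: given $A\sigma$ a suffix code, $\alpha = \sigma\rho_u$ preserves an arbitrary product $(v_1^{-1}v_2)(w_1^{-1}w_2)$. Using $uu^{-1} = 1$ and that $\sigma$ is a monoid homomorphism on $A^*$, both sides collapse to expressions whose comparison hinges on the factor $(v_2\sigma)(w_1\sigma)^{-1}$. When $v_2 = xw_1$ or $w_1 = zv_2$ this factor simplifies to $x\sigma$ or $(z\sigma)^{-1}$, matching $\alpha$ applied to the reduced product. The main obstacle is the zero case $v_2 w_1^{-1}=0$: here one writes $v_2 = xy$, $w_1 = zy$ with $y$ the longest common suffix and the last letters of $x$ and $z$ distinct (say $a_i$ and $a_j$), and then tracks the longest common suffix through $\sigma$---the suffix-code property of $A\sigma$ applied to $a_i\sigma$ and $a_j\sigma$ ensures that the longest common suffix of $v_2\sigma$ and $w_1\sigma$ leaves non-empty residues on both sides, whence $(v_2\sigma)(w_1\sigma)^{-1}=0$ and the zero product is preserved as required.
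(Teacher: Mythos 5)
Your proof is correct, and it reaches the classification by a genuinely different route than the paper's at the decisive step. Both arguments start from the same place (the three types of ordered functors underlying Proposition \ref{prem_polycyclic}), but the paper then invokes the Ehresmann--Schein--Nambooripad correspondence to translate ``endomorphism'' into ``meet-preserving ordered functor'' on the semilattice $A^* \cup \{0\}$ with the suffix order, and outsources the crucial equivalence (preservation of incomparability $\Leftrightarrow$ $\sigma$ injective with $A\sigma$ a suffix code) to Kari--Thierrin \cite[Proposition 2.2]{KaTh}; you instead test multiplicativity directly in $P_n$ on the probe products $a_i a_j^{-1}=0$, $i \ne j$, which correctly forces $w=t$ in the collapsing case and forces $(a_i\sigma)(a_j\sigma)^{-1}=0$ in the affine case (your observation that conjugation by $u$ does not annihilate nonzero elements is exactly what is needed there), and your longest-common-suffix argument for the converse is a sound elementary replacement for the citation. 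What your approach buys is self-containedness and an explicit combinatorial mechanism; what the paper's buys is brevity and the cleaner conceptual statement that endomorphisms are precisely the inductive functors. Two small points to tighten: the condition you extract from the probes is that for $i\ne j$ neither of $a_i\sigma, a_j\sigma$ is a suffix of the other, which is slightly stronger than ``$A\sigma$ is a suffix code as a set'' (it also rules out $a_i\sigma=a_j\sigma$ and $a_i\sigma=1$), and it is this stronger condition that gives injectivity on $A$; and your step from there to injectivity of $\sigma$ on all of $A^*$ silently uses the standard fact that a suffix code generates a free submonoid (suffix codes are codes), which deserves a sentence or a reference, whereas the paper gets injectivity bundled into the Kari--Thierrin equivalence. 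Note also that only the necessity direction is required by the statement as given, so your sufficiency verification, while correct in outline, is a bonus rather than an obligation.
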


\begin{proof}
The Ehresmann-Schein-Nambooripad Theorem (see section \ref{is_and_og}) shows that the endomorphisms of an inverse semigroup $S$
are in one-to-one correspondence with the \emph{inductive} functors $\vec{S} \ra \vec{S}$, that is, the ordered functors that preserve the
meet operation on $E(S)$.  Hence endomorphisms of $P_n$ correspond to ordered functors $\vec{P_n} \ra \vec{P_n}$ that preserve the meet
in the suffix order on $A^* \cup \{ 0 \}$:
\begin{enumerate}
\item[(i)] $0 \cdot u = 0$ for all $u \in A^* \cup \{ 0 \}$,
\item[(ii)] $u \cdot v = 0$ if $u,v \in A^*$ are incomparable in the suffix order,
\item[(iii)] $u \cdot v = v$ if $u,v \in A^*$ and $v$ is a suffix of $u$.
\end{enumerate}
Clearly $c_0$ determines the constant endomorphism to $0 \in P_n$.   Now a map $c_{w,s}$ preserves the meet in cases (i) and (iii) but in case (ii)
we require $s \cdot s = w$.  Hence $s=w$, and the corresponding endomorphism of $P_n$ is constant at $w$.

Now $\sigma \rho_w \in \aff(A^*)$ yields an ordered functor mapping $0 \mapsto 0$ and $u \mapsto (u \sigma)w, u \in A^*$.  Again such a map preserves
the meet in cases (i) and (iii) but in case (ii) we require that $(u \sigma)w$ and $(v \sigma)w$ are incomparable in the suffix order if and only if
$u,v$ are incomparable.  Equivalently, we require $u,v$ to be comparable in the suffix order if and only if $u \sigma, v\sigma$ are.  Since $\sigma$
is an endomorphism of $A^*$, if $u,v$ are comparable then so are  $u \sigma, v\sigma$.  For the converse, we call on \cite[Proposition 2.2]{KaTh}
re-stated for the suffix order on $A^*$: that $u \sigma \leq v \sigma$ implying that $u \leq v$ is equivalent to $\sigma$ being injective with
$A \sigma$ a suffix code in $A^*$.
\end{proof}

\end{document}